\documentclass[11pt]{article}
\usepackage{latexsym,amssymb,amsmath}
\usepackage{amssymb,amsthm,upref,amscd}
\usepackage{color}
\usepackage[T1]{fontenc}
\textwidth 14.5cm \textheight 23cm \oddsidemargin -0.0cm
\evensidemargin -0.0cm \topmargin -1.5cm

\begin{document}
\baselineskip=15pt

\numberwithin{equation}{section}

\newtheorem{thm}{Theorem}[section]
\newtheorem{lem}[thm]{Lemma}
\newtheorem{cor}[thm]{Corollary}
\newtheorem{Prop}[thm]{Proposition}
\newtheorem{Def}[thm]{Definition}
\newtheorem{Rem}[thm]{Remark}
\newtheorem{Ex}[thm]{Example}
\newtheorem{Claim}[thm]{Claim}

\newcommand{\A}{\mathbb{A}}
\newcommand{\B}{\mathbb{B}}
\newcommand{\C}{\mathbb{C}}
\newcommand{\D}{\mathbb{D}}
\newcommand{\E}{\mathbb{E}}
\newcommand{\F}{\mathbb{F}}
\newcommand{\G}{\mathbb{G}}
\newcommand{\I}{\mathbb{I}}
\newcommand{\J}{\mathbb{J}}
\newcommand{\K}{\mathbb{K}}
\newcommand{\M}{\mathbb{M}}
\newcommand{\N}{\mathbb{N}}
\newcommand{\Q}{\mathbb{Q}}
\newcommand{\R}{\mathbb{R}}
\newcommand{\T}{\mathbb{T}}
\newcommand{\U}{\mathbb{U}}
\newcommand{\V}{\mathbb{V}}
\newcommand{\W}{\mathbb{W}}
\newcommand{\X}{\mathbb{X}}
\newcommand{\Y}{\mathbb{Y}}
\newcommand{\Z}{\mathbb{Z}}
\newcommand\ca{\mathcal{A}}
\newcommand\cb{\mathcal{B}}
\newcommand\cc{\mathcal{C}}
\newcommand\cd{\mathcal{D}}
\newcommand\ce{\mathcal{E}}
\newcommand\cf{\mathcal{F}}
\newcommand\cg{\mathcal{G}}
\newcommand\ch{\mathcal{H}}
\newcommand\ci{\mathcal{I}}
\newcommand\cj{\mathcal{J}}
\newcommand\ck{\mathcal{K}}
\newcommand\cl{\mathcal{L}}
\newcommand\cm{\mathcal{M}}
\newcommand\cn{\mathcal{N}}
\newcommand\co{\mathcal{O}}
\newcommand\cp{\mathcal{P}}
\newcommand\cq{\mathcal{Q}}
\newcommand\rr{\mathcal{R}}
\newcommand\cs{\mathcal{S}}
\newcommand\ct{\mathcal{T}}
\newcommand\cu{\mathcal{U}}
\newcommand\cv{\mathcal{V}}
\newcommand\cw{\mathcal{W}}
\newcommand\cx{\mathcal{X}}
\newcommand\ocd{\overline{\cd}}

\def\c{\centerline}
\def\ov{\overline}
\def\emp {\emptyset}
\def\pa {\partial}
\def\bl{\setminus}
\def\op{\oplus}
\def\sbt{\subset}
\def\un{\underline}
\def\al {\alpha}
\def\bt {\beta}
\def\de {\delta}
\def\Ga {\Gamma}
\def\ga {\gamma}
\def\lm {\lambda}
\def\Lam {\Lambda}
\def\om {\omega}
\def\Om {\Omega}
\def\sa {\sigma}
\def\vr {\varepsilon}
\def\va {\varphi}

\title{\bf Existence of solutions for a nonlocal variational problem in $\R^2$ with exponential critical growth}

\author{Claudianor O. Alves\thanks{Partially supported by CNPq/Brazil
304036/2013-7, coalves@dme.ufcg.edu.br}\\
{\small  Universidade Federal de Campina Grande} \\ {\small Unidade Acad\^emica de Matem\'{a}tica} \\ {\small CEP: 58429-900, Campina Grande - Pb, Brazil}
\\
\\
Minbo Yang\thanks{Supported by NSFC (11101374, 11271331) and CNPq/Brazil 500001/2013-8, mbyang@zjnu.edu.cn}\vspace{2mm}
\\
{\small  Department of Mathematics, Zhejiang Normal University} \\ {\small  Jinhua, Zhejiang, 321004, P. R. China.}}

\date{}
\maketitle

\begin{abstract}
We study the existence of solution for the following class of nonlocal problem,
$$
-\Delta u +V(x)u =\Big( I_\mu\ast F(x,u)\Big)f(x,u)  \quad \mbox{in} \quad \R^2,
$$
where $V$ is a positive periodic potential, $I_\mu=\frac{1}{|x|^\mu}$, $0<\mu<2$ and $F(x,s)$ is the primitive function of $f(x,s)$ in the variable $s$.  In this paper, by assuming that the nonlinearity $f(x,s)$ has an exponential critical growth at infinity, we prove the existence of solutions by using variational methods.
 \vspace{0.3cm}

\noindent{\bf Mathematics Subject Classifications (2000):}35J50, 35J60, 35A15

\vspace{0.3cm}

 \noindent {\bf Keywords:}  nonlocal nonlinearities; exponential critical growth; ground state solution.
\end{abstract}

\section{Introduction and main results}

At the last years, many attention have been given to the problem 
$$
\left\{\aligned &-\Delta u +V(x)u =\Big( I_\mu\ast F(x,u)\Big)f(x,u)  \quad \mbox{in} \quad \R^N, \\
&  u\in H^{1}({\R}^N),\\
& u(x)>0\ \ \mbox{for all}\ \ x\in\mathbb{R}^N,
\endaligned\right.\eqno{(P)}
$$
where $0<\mu<N$, $I_\mu=\frac{1}{|x|^\mu}$, $F(x,s)$ is the primitive function of $f(x,s)$ in the variable $s$ and $V,f$ are continuous verifying some conditions. Here 
$ I_\mu\ast F(x,u)$ denotes the convolution between $I_\mu$ and $F(.,u(.))$.

This problem comes from looking for standing waves of the nonlinear nonlocal Schr\"{o}dinger
equation which is known to influence the propagation of electromagnetic
waves in plasmas \cite{BC} and also plays an important
role in the theory of Bose-Einstein condensation \cite{D}.
It is used in the description of the quantum theory of a polaron at rest by S. Pekar in 1954 \cite{P1}
and the modeling of an electron trapped
in its own hole in 1976 in the work of P. Choquard, in a certain approximation to Hartree-Fock theory of one-component
plasma \cite{L1}.

If  $F(x,s)=|s|^{q}$, then we arrive at the Choquard-Pekar equation,
\begin{equation}\label{Nonlocal.S1}
 -\Delta u +V(x)u =\Big(\frac{1}{|x|^{\mu}}\ast|u|^{q}\Big)|u|^{q-2}u  \quad \mbox{in} \quad \R^N.
\end{equation}
In the case $N\geq3$, if $V(x)=1$,  Lieb \cite{L1} proved the existence and uniqueness, up to translations,
of the ground state to equation \eqref{Nonlocal.S1}. Later, in \cite{Ls}, Lions showed the existence of
a sequence of radially symmetric solutions to this equation. Involving the properties of the ground state solutions,
 Ma and Zhao \cite{ML} proved that every positive solution of it is radially symmetric and monotone decreasing about some point, under the assumption that a certain set of real numbers, defined in terms of $N, \mu$ and $q$, is nonempty. Under the
same assumption, Cingolani, Clapp and Secchi \cite{CCS1}  proved the existence and
multiplicity results in the electromagnetic case, and established the regularity and decay behavior at infinity of the ground states of \eqref{Nonlocal.S1}. Moroz and Van Schaftingen \cite{MS1} eliminated this restriction and showed the regularity, positivity
and radial symmetry of the ground states for the optimal range of parameters, and
derived decay property at infinity as well. When $V$ is a continuous periodic function with $\inf_{\mathbb{R}^{N}} V(x)> 0$, noticing that the nonlocal term is invariant under translation, one can obtain the existence result easily by applying the Mountain Pass Theorem, see \cite{AC} for example.
For periodic potential $V$ that changes sign and $0$ lies in the gap of the spectrum of the Schr\"{o}dinger operator $-\Delta +V$, the problem is strongly indefinite it have been considered in \cite{BJS}. In that paper, the existence of nontrivial solution with $\mu=1$ and $F(u)=u^2$ have been obtained by using the reduction methods. For a general class of response function $Q$ and nonlinearity $f$, Ackermann \cite{AC} proposed an approach to prove the
existence of infinitely many geometrically distinct weak solutions.

In the study made in the above papers, it was crucial the following Hardy-Littlewood-Sobolev inequality.

\begin{Prop} \cite{LL}$\,\,[Hardy-Littlewood-Sobolev \ inequality]$:\\
Let $t, r>1$ and $0<\mu<N$ with $1/t+\mu/N+1/r=2$. If $f\in
L^t(\R^N)$ and $h\in L^r(\R^N)$, then there exists a sharp constant
$C(t,\mu,r)$, independent of $f,h$, such that
$$
\int_{\R^N}\int_{\R^N}\frac{f(x)h(y)}{|x-y|^\mu}\leq
C(t,\mu,r) |f|_t|h|_r.
$$
\end{Prop}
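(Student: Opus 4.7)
The strategy is to reduce the bilinear inequality, by duality, to an $L^t \to L^{r'}$ boundedness statement for the Riesz-potential-type operator
\[
T_\mu f(x) := \int_{\R^N} \frac{f(y)}{|x-y|^\mu}\,dy,
\]
and then establish the latter via a pointwise estimate by the Hardy-Littlewood maximal function followed by the Hardy-Littlewood-Wiener maximal theorem. Writing the double integral as $\int_{\R^N} h(x)\,T_\mu f(x)\,dx$ and applying H\"older's inequality gives a bound of $|h|_r \cdot |T_\mu f|_{r'}$ with $1/r + 1/r' = 1$. The hypothesis $1/t + \mu/N + 1/r = 2$ rewrites as $1/r' = 1/t - (1-\mu/N)$, which is exactly the Sobolev scaling for $T_\mu$ to map $L^t$ into $L^{r'}$; so the task reduces to showing $|T_\mu f|_{r'} \leq C |f|_t$ for $1 < t < N/(N-\mu)$.

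The key step is Hedberg's pointwise bound. For each $R>0$, split $T_\mu f(x)$ into the integrals over $\{|x-y|\leq R\}$ and $\{|x-y|>R\}$. Decomposing the ball $\{|x-y|\leq R\}$ into dyadic annuli $\{2^{-k-1}R < |x-y|\leq 2^{-k}R\}$ and using the definition of the maximal function $Mf$, the near piece is bounded, after summing a geometric series convergent because $N-\mu>0$, by $C\,R^{N-\mu}\,Mf(x)$. For the far piece, H\"older's inequality together with the integrability of $|z|^{-\mu t'}$ at infinity (valid precisely when $t < N/(N-\mu)$, since this is equivalent to $\mu t' > N$) gives $C\,R^{N-\mu-N/t}\,|f|_t$. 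Optimizing $R$ so as to balance these two quantities produces the pointwise inequality
\[
|T_\mu f(x)| \leq C\,Mf(x)^{t/r'}\,|f|_t^{\,1-t/r'}.
\]

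Raising to the $r'$-power, integrating in $x$, and invoking the Hardy-Littlewood-Wiener inequality $|Mf|_t \leq C_t |f|_t$ (which requires $t>1$, granted by hypothesis) yields $|T_\mu f|_{r'} \leq C'|f|_t$, completing the reduction. The principal technical obstacle in this route is the dyadic-annulus estimate controlling the near piece and the balanced choice of $R$; once these are in hand, the maximal theorem does the remaining work with no interpolation required. An alternative route proves two weak-type endpoint estimates and then invokes Marcinkiewicz interpolation, but the Hedberg argument seems more direct for the present formulation. I emphasize that the proposition only asserts the existence of a sharp constant, so the argument above suffices; identifying the explicit value of $C(t,\mu,r)$ is a substantially deeper result, due to Lieb, requiring Riesz rearrangement to radially symmetric decreasing functions and conformal-invariance techniques.
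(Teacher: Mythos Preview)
Your argument is correct: the reduction to the $L^t\!\to\!L^{r'}$ mapping property of the Riesz potential via duality, followed by Hedberg's pointwise bound $|T_\mu f(x)|\le C\,Mf(x)^{t/r'}|f|_t^{1-t/r'}$ obtained from the dyadic near-piece estimate and the H\"older far-piece estimate, and concluded by the maximal theorem, is a standard and complete proof of the stated inequality. The exponent bookkeeping you record is right, in particular the equivalence between $t<N/(N-\mu)$ and the integrability of $|z|^{-\mu t'}$ at infinity, and the optimization in $R$ yielding the exponent $t/r'$.

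There is, however, nothing to compare against in the paper: the proposition is quoted from Lieb--Loss \cite{LL} and is not proved there; it is used as a black box throughout (to ensure $\big(I_\mu\ast F(x,u)\big)F(x,u)\in L^1$ and in the various energy estimates). So your write-up supplies a proof where the paper supplies only a reference. Your closing remark is also apt: the statement only claims existence of a sharp constant, and your argument delivers \emph{some} finite constant; the identification of the optimal $C(t,\mu,r)$ is Lieb's separate and deeper result and is not needed anywhere in the paper.
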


The above inequality permits to use variational method to get a solution for problem $(P)$, for a large class of nonlinearity $f$, which has in general a subcritical growth. However, we can observe that the Hardy-Littlewood-Sobolev inequality also holds for $N=2$, motivated by this fact, at least from a mathematical point of view, it seems to be interesting to ask if the existence of solution still holds for nonlinearities $f$ having an "{\it exponential subcritical growth}" or "{\it exponential critical growth}" in $\R^2$, since a lot of estimates made for the case $N \geq 3$ cannot be repeated easily for the case $N=2$, when the nonlinearity $f$ has an exponential growth, because in dimension 2, it is well known that the Trudinger-Moser inequality is a crucial tool to work with this type of nonlinearity. Here, we focus our attention for more difficulty case, that is, the "{\it exponential critical growth}" in $\R^2$.   However, we would point out that we cannot say that problem $(P)$ in $\mathbb{R}^{2}$ is a  
{\it nonlinear Choquard equation}, because in dimension 2 the kernel associated with a Choquard equation, namely the therm $I_\mu$,  must involve a logarithmic convolution potential, which does not occur in our problem.  

Since we intend to work with nonlinearity with "{\it exponential critical growth}" in $\R^2$, we mean that the function $f(x,s)$ has
an \emph{exponential critical growth} when it behaves like $e^{\alpha s^2}$ as  $|s|\to+\infty$. More exactly, there exists $\alpha_0>0$ such that
\begin{equation}\label{ecg}
\lim_{|s|\to +\infty}\frac{|f(x,s)|}{e^{\alpha s^2}}=0,\ \ \forall \alpha>\alpha_0,\ \ \hbox{and} \ \ \lim_{|s|\to +\infty}\frac{|f(x,s)|}{e^{\alpha s^2}}=+\infty,\ \ \forall \alpha<\alpha_0.
\end{equation}
The above notation of criticality was introduced by Admurth and Yadava \cite{AY}, see also de Figueiredo, Miyagaki and Ruf \cite{DMR}.

To work with problems where the nonlinearity has an exponential critical growth, one of the most important tools is the Trudinger-Moser inequality, which says that if $\Omega$ is a bounded domain in $\mathbb{R}^2$,  then for all $\alpha>0$ and $u\in
H_0^1(\Omega)$, $e^{\alpha u^2}\in L^1(\Omega)$. Moreover, there exists a positive constant
$C$ such that
\[
\sup_{u\in H_0^1(\Omega) \; : \; \|\nabla u\|_2\leq
1}\int_{\Omega} e^{\beta
 u^2}\leq C|\Omega| \quad\text{if }\alpha \leq 4\pi,
\]
where $|\Omega|$ denotes the Lebesgue measure of $\Omega$. This
inequality is optimal, in the sense that for any growth $e^{\alpha
u^2}$ with $\alpha > 4\pi$ the correspondent supremum is infinite.  In the present paper, we are working in whole $\mathbb{R}^{2}$, this way, it is more convenient for us to use the following Trudinger-Moser type inequality in $H^{1}(\mathbb{R}^2)$ due to Cao \cite{Cao}, which is crucial for our variational arguments
\begin{lem} \label{Trudinger-Moser}
If $\alpha >0$ and $u\in H^{1}(\mathbb{R}^2)$, then
\begin{equation}\label{TM1}
\int_{\mathbb{R}^2}\Big[ e^{ \alpha | u | ^2}
-1 \Big] <\infty .
\end{equation}
Moreover, if $\|\nabla u\| _{L^2(\mathbb{R}^2)}^2\leq 1$,
 $\| u\| _{L^2(\mathbb{R}^2)}\leq M<\infty $ and $\alpha <\alpha
_0=4\pi$, then there exists a constant
$C$, which depends only on $M$ and $\alpha $,
such that
\begin{equation}\label{TM2}
\int_{\mathbb{R}^2}\big[ e^{\alpha | u| ^{2}}
-1 \big] \leq C(M,\alpha ).
\end{equation}
\end{lem}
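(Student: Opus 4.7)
My plan is to reduce both assertions to the classical Moser inequality on sets of finite measure: if $\Omega \sbt \R^2$ has finite Lebesgue measure and $v \in H^1(\R^2)$ is supported in $\Omega$ with $\|\nabla v\|_2 \leq 1$, then $\int_\Omega e^{\beta v^2} \leq C|\Omega|$ for every $\beta \leq 4\pi$ (this variant follows from Moser's theorem on bounded smooth domains via Schwarz symmetrization). The bridge from this local statement to $\R^2$ is the observation that $\{|u|>R\}$ has finite Lebesgue measure whenever $u \in L^2(\R^2)$, so the exponential part of the integrand can be localized to a set where the bounded-domain inequality applies, while on its complement the integrand is dominated by $u^2$.

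\textbf{Key decomposition.} For $R>0$ split $\R^2 = A_R \cup B_R$ with $A_R = \{|u| \leq R\}$ and $B_R = \{|u| > R\}$. On $A_R$, convexity of $t \mapsto e^t - 1$ on $[0,\al R^2]$ yields the chord bound
\[
 e^{\al u^2} - 1 \;\leq\; \frac{e^{\al R^2}-1}{R^2}\, u^2,
\]
and hence $\int_{A_R}(e^{\al u^2}-1) \leq (e^{\al R^2}-1)R^{-2}\|u\|_2^2$. On $B_R$ introduce the truncation $v := (|u|-R)^+$, which lies in $H^1(\R^2)$, is supported in $B_R$, and satisfies $\|\nabla v\|_2 \leq \|\nabla u\|_2$ together with $|B_R| \leq R^{-2}\|u\|_2^2$. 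For any $\vr > 0$, Young's inequality $2Rv \leq \vr v^2 + \vr^{-1}R^2$ gives the pointwise bound $u^2 = (v+R)^2 \leq (1+\vr)v^2 + (1+\vr^{-1})R^2$ on $B_R$, whence
\[
 e^{\al u^2} \;\leq\; e^{\al(1+\vr^{-1})R^2}\, e^{\al(1+\vr)v^2} \qquad \text{on } B_R.
\]

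\textbf{Finishing the two claims.} For \eqref{TM2}, take $R = 1$, so that $|B_1| \leq M^2$ and $\|\nabla v\|_2 \leq 1$. Since $\al < 4\pi$, choose $\vr > 0$ with $\al(1+\vr) \leq 4\pi$; the Moser inequality on the finite-measure set $B_1$ then gives $\int_{B_1}e^{\al(1+\vr)v^2} \leq C|B_1| \leq CM^2$, which combined with the $A_1$-estimate yields a constant depending only on $M$ and $\al$. For \eqref{TM1} the obstacle---the one genuinely delicate point---is that $\al$ is unrestricted, so in general $\al(1+\vr)\|\nabla u\|_2^2$ may exceed $4\pi$. The remedy is to let $R \to \infty$: since $|\nabla u|^2 \in L^1(\R^2)$, dominated convergence forces $\|\nabla v\|_2^2 = \int_{B_R}|\nabla u|^2 \to 0$ as $R \to \infty$. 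Thus for $R$ large, $\al(1+\vr)\|\nabla v\|_2^2 \leq 4\pi$, and Moser's inequality applied to the normalization $v/\|\nabla v\|_2$ on the finite-measure set $B_R$ gives $\int_{B_R}e^{\al(1+\vr)v^2} < \infty$, completing the proof.
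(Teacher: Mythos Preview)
The paper does not supply its own proof of this lemma: it quotes the result as the Trudinger--Moser type inequality in $H^{1}(\mathbb{R}^2)$ ``due to Cao \cite{Cao}'' and uses it as a black box throughout. So there is no in-paper argument to compare against.

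That said, your argument is correct and is essentially the standard route (and close to Cao's original one). The splitting $\{|u|\leq R\}\cup\{|u|>R\}$, the chord estimate on the first set, and the truncation $v=(|u|-R)^+$ together with Chebyshev and the Moser inequality on finite-measure supports are exactly the expected ingredients. Two minor remarks worth recording: (i) when you normalize by $\|\nabla v\|_2$ in the proof of \eqref{TM1}, the degenerate case $\|\nabla v\|_2=0$ forces $v\equiv 0$ (since $v\in H^1(\R^2)$), hence $|B_R|=0$ and the estimate is trivial; (ii) the passage ``Moser on finite-measure sets via Schwarz symmetrization'' is the right justification, since the textbook statement is for bounded smooth domains with $H^1_0$ boundary values, and rearrangement reduces to a ball of the same measure while not increasing the Dirichlet energy. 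With these points noted, your proof is complete and self-contained, which the paper's treatment is not.
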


We assume that  $V:\R^2\to\R$ is continuous and satisfies: \\
$(V)$  \,\, There are $\alpha>0$ and a continuous 1-periodic continuous $V_0:\mathbb{R}^{2} \to \mathbb{R}$  such that
$$
0<\alpha \leq V(x)\leq V_0(x)  \,\,\, \forall x \in \mathbb{R}^{2} \leqno{(1)}
$$
and
$$
|V(x)-V_0(x)| \to 0 \,\,\, |x| \to +\infty. \leqno{(2)}
$$

In the sequel, $E$ denotes the Sobolev space $H^{1}(\R^2)$ equipped with the norm
 $$
 \|u\|:=\left(\int_{\R^2}(|\nabla u|^2+ V( x)|u|^2)\right)^{1/2}
 $$
and $L^s(\R^2)$, for $1 \leq s \leq \infty$, denotes the Lebesgue space endowed with the usual norm $|\,\,\,\,\,\,\,|_s$.

Since the imbedding $H^{1}(\R^2)\hookrightarrow L^p(\R^2)$ is continuous for any $p\in (2,+\infty)$, from the Hardy-Littlewood-Sobolev inequality, there is a best constant $S_p$ verifying
$$
\displaystyle S_p=\displaystyle\inf_{u\in E, u\neq0}\frac{\displaystyle\left(\int_{\R^2}(|\nabla u|^2+ |V|_{\infty}|u|^2)\right)^{1/2}}{\displaystyle\left(\int_{\R^2}\Big( I_\mu\ast |u|^p\Big)|u|^p\right)^{\frac{1}{2p}}}.
$$
Moreover, a standard minimizing argument shows that there exists a positive radial function $u_p\in E$ such that $S_p$ is achieved by $u_p$.

Related to function $f$, we assume that there is a 1-periodic continuous function $\tilde{f}(x,s)$ such that:\\
$$
0\leq \tilde{f}(x,s)\leq {f}(x,s)  \leq \,\,\,  Ce^{4 \pi s^{2}}  \,\,\,\, \forall s \geq 0. \eqno{(f_1)}
$$
There holds
$$
\lim_{s\to 0}\frac{f(x,s)}{s^{\frac{2-\mu}{2}}}=0, \ \ \lim_{s\to 0}\frac{\tilde{f}(x,s)}{s^{\frac{2-\mu}{2}}}=0. \eqno{(f_2)}
$$
There exist ${\theta}\geq\tilde{\theta}>2$, such that
$$
0<\theta F(x,s)\leq 2f(x,s)s,\ \ 0<\tilde{\theta}\tilde{F}(x,s)\leq 2\tilde{f}(x,s)s ~~~~ \forall s>0, \eqno{(f_3)}
$$
where $F(x,t)=\int^t_0f(x,s)ds$ and $\tilde{F}(x,s)=\int^t_0\tilde{f}(x,s)ds$.\\
There exists $p>\frac{4-\mu}{2}$, such that
$$
F(x,s)\geq C_ps^p\,\,\,\, \forall s\geq0 \eqno{(f_4)}
$$
where
$$
C_p>\frac{[\frac{4\theta(p-1)}{(2-\mu)(\theta-2)}]^{\frac{p-1}{2}}S^{p}_p}{p^{\frac{p}{2}}}.
$$
For any fixed $x\in \R^2$, the functions
$$
s\to f(x,s),\ \  \tilde{ f}(x,s)\ \ \hbox{are increasing}. \eqno{(f_5)}
$$
Moreover, $ \tilde{F}(x,s)>F(x,s)$ for any $s\neq0$ and there exists $A \in L^{\infty}(\mathbb{R}^{2})$ verifying
$$
A(x)\to 0 \,\,\, \mbox{as} \,\,\, |x|\to \infty
$$
and
$$
|\tilde{ f}(x,s)-f(x,s)|\leq A(x)\Big(s^{\frac{2-\mu}{2}}+ e^{4\pi s^2} \Big) \,\,\, \forall x \in \mathbb{R}^{2} \,\,\, \mbox{and} \,\,\, \forall s \in \mathbb{R}.\eqno{(f_6)}
$$
The first result of this paper is associated with the periodic case, and it has the following statement
\begin{thm}\label{MRS1}
Assume $N=2$, $0<\mu<2$, $(V-1)$ with $V=V_0$ and $(f_1)-(f_5)$ with
$\tilde{ f}=f$. Then, $(P)$ has a ground state solution in $H^{1}(\R^2)$.
\end{thm}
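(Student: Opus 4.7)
The plan is to apply the mountain pass theorem to the energy functional
\[
J(u)=\tfrac{1}{2}\|u\|^{2}-\tfrac{1}{2}\int_{\R^{2}}\bigl(I_\mu\ast F(x,u)\bigr)F(x,u),\qquad u\in E,
\]
and then exploit the periodicity of $V_0$ and $f$ to recover compactness. First I would verify the mountain pass geometry: for $\|u\|$ small, $(f_1)$--$(f_2)$ together with the Hardy--Littlewood--Sobolev inequality and the Trudinger--Moser inequality of Lemma \ref{Trudinger-Moser} give $\int(I_\mu\ast F(x,u))F(x,u)=o(\|u\|^{2})$, so $J(u)\geq\rho>0$ on a small sphere; the Ambrosetti--Rabinowitz type condition $(f_3)$ yields a fixed $e\in E$ with $J(e)<0$. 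Denote the mountain pass level by $c$ and take a Cerami (or Palais--Smale) sequence $(u_n)\subset E$ with $J(u_n)\to c$, $(1+\|u_n\|)J'(u_n)\to 0$; by $(f_3)$ this sequence is bounded in $E$.

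The crucial step is the minimax level estimate
\[
0<c<\tfrac{2-\mu}{4}\,\pi,
\]
which is exactly what ensures that $\limsup_n\|\nabla u_n\|_2^{2}$ stays below the Trudinger--Moser threshold, allowing us to control the exponential nonlinearity along the sequence. Here I would use the radial minimizer $u_p$ of $S_p$ as a test path: by $(f_4)$ one has $F(x,s)\geq C_p s^{p}$, hence
\[
J(tu_p)\leq\tfrac{t^{2}}{2}\|u_p\|^{2}-\tfrac{C_p^{2}t^{2p}}{2}\int_{\R^{2}}\bigl(I_\mu\ast|u_p|^{p}\bigr)|u_p|^{p},
\]
and maximizing over $t>0$ and inserting the definition of $S_p$ together with $(f_3)$ reduces the required inequality to the explicit lower bound on $C_p$ prescribed in $(f_4)$. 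Combining this estimate with the boundedness of $(u_n)$, the Hardy--Littlewood--Sobolev inequality and Lemma \ref{Trudinger-Moser} gives, for some $r,q>1$ with $r$ close to $1$, a uniform bound
\[
\int_{\R^{2}}\bigl[e^{\alpha r u_n^{2}}-1\bigr]\leq K,
\]
which provides the equi-integrability needed to pass to the limit in the nonlocal term on compact sets.

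With compactness in hand, I would rule out vanishing. If $\sup_{y\in\R^{2}}\int_{B_1(y)}u_n^{2}\to 0$, then Lions' lemma gives $u_n\to 0$ in every $L^{s}(\R^{2})$, $s>2$; the previous exponential estimate plus $(f_1)$--$(f_2)$ then forces $\int(I_\mu\ast F(x,u_n))F(x,u_n)\to 0$ and $\int(I_\mu\ast F(x,u_n))f(x,u_n)u_n\to 0$, so $J(u_n)\to 0$, contradicting $c>0$. Hence there are points $y_n\in\R^{2}$ and $\delta>0$ with $\int_{B_1(y_n)}u_n^{2}\geq\delta$; choose $z_n\in\Z^{2}$ with $|y_n-z_n|$ bounded and set $\tilde u_n(\cdot)=u_n(\cdot+z_n)$. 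Periodicity of $V_0$ and $f$ implies $J(\tilde u_n)=J(u_n)\to c$, $J'(\tilde u_n)\to 0$, and $\tilde u_n\rightharpoonup u\neq 0$ in $E$. Standard Br\'ezis--Lieb/density arguments (together with the equi-integrability of the exponential term) let me pass to the weak limit in $J'(\tilde u_n)\varphi=o(1)$ for every $\varphi\in C_c^{\infty}(\R^{2})$, so $u$ is a nontrivial weak solution of $(P)$; a maximum-principle argument and $(f_2)$ make $u$ positive.

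Finally I would produce a ground state. Let $\cn=\{v\in E\setminus\{0\}:J'(v)v=0\}$ and $m=\inf_{\cn}J$. Monotonicity $(f_5)$ plus $(f_3)$ give the standard Nehari structure: each $v\neq 0$ has a unique $t_v>0$ with $t_v v\in\cn$, and $J(t_v v)=\max_{t>0}J(tv)$; consequently $m=c$. Any minimizing sequence in $\cn$ is a bounded Cerami sequence at level $c$, and the same non-vanishing/translation argument above produces a minimizer, i.e.\ a ground state solution. The main obstacle throughout is to keep the mountain pass level strictly below the critical Trudinger--Moser threshold so that the exponential nonlinearity remains uniformly integrable along the sequence; this is exactly where the sharp quantitative lower bound on $C_p$ in $(f_4)$, coming from the minimizer $u_p$ of $S_p$, is used in an essential way.
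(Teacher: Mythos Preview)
Your outline matches the paper's proof almost step for step: mountain pass geometry, the level estimate via the $S_p$-minimizer $u_p$ together with $(f_4)$, boundedness of the Palais--Smale sequence from $(f_3)$, Lions' dichotomy to exclude vanishing, $\Z^2$-translation using the periodicity of $V_0$ and $f$, passage to the weak limit in $J'$, and finally the Nehari characterization $c_V=\inf_{\cn}J$ to conclude that the limit is a ground state.

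One point needs correcting, however. The threshold you state for the mountain pass level is wrong: there is no $\pi$ in it. What the paper proves (and what the lower bound on $C_p$ in $(f_4)$ is calibrated to produce) is
\[
c_V<\frac{(2-\mu)(\theta-2)}{8\theta},
\]
which, combined with $c_V\geq\bigl(\tfrac12-\tfrac1\theta\bigr)\limsup_n\|u_n\|^2$ coming from $(f_3)$, yields $\limsup_n\|u_n\|^2<\frac{2-\mu}{4}$. It is the full $E$-norm (not just $\|\nabla u_n\|_2$) that must be controlled, and the specific number $\frac{2-\mu}{4}$ arises because after Hardy--Littlewood--Sobolev one has to bound $|f(x,u_n)|_{4/(2-\mu)}$, which via Lemma~\ref{Trudinger-Moser} requires $\frac{4\beta}{2-\mu}\|u_n\|^2<1$ for some $\beta>1$. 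Your inequality $c<\frac{2-\mu}{4}\pi$ is far too weak to yield this, so taken literally the argument would not close. A second step you gloss over: to pass to the limit in $J'(\tilde u_n)\varphi$, the paper does not rely on a Br\'ezis--Lieb type splitting but first establishes the uniform bound $\sup_n|I_\mu\ast F(\cdot,u_n)|_\infty<\infty$ (by splitting the convolution over $|x-y|\leq1$ and $|x-y|\geq1$) and then uses weak convergence of $f(x,u_n)$ in $L^{4/(2-\mu)}$ and of $F(x,u_n)$ in $L^{4/(4-\mu)}$. This $L^\infty$ estimate on the convolution is precisely where the sharp bound $\|u_n\|^2<\frac{2-\mu}{4}$ is exploited most delicately.
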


Concerning with the problem where the nonlinearity is asymptotically periodic, our main result is the following
\begin{thm}\label{MRS2}
Assume $N=2, 0<\mu<2$, $(V)$  and $(f_1)-(f_6)$. Then, $(P)$ has a ground state solution in $H^{1}(\R^2)$.
\end{thm}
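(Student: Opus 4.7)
The plan is to apply a mountain-pass argument in $E = H^1(\R^2)$ endowed with the $V$-weighted norm, and then recover compactness via a concentration-compactness analysis that exploits the asymptotic periodicity encoded in $(V)(2)$ and $(f_6)$. Let
$$I(u) = \frac{1}{2}\|u\|^2 - \frac{1}{2}\int_{\R^2}\bigl(I_\mu \ast F(x,u)\bigr) F(x,u)$$
and let $I_0$ denote the analogous functional built from $V_0$ and $\tilde f$. Combining Hardy-Littlewood-Sobolev with the exponent $4/(4-\mu)$ and Lemma~\ref{Trudinger-Moser}, $(f_1)$-$(f_3)$ give $I, I_0 \in C^1(E,\R)$ together with the mountain-pass geometry; denote their minimax levels by $c$ and $c_0$. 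The AR-type assumption $(f_3)$ forces every Palais-Smale sequence to be bounded in $E$, while the explicit lower bound on $C_p$ in $(f_4)$, combined with the variational characterization of $S_p$, makes $c\le c_0$ lie strictly below a threshold at which the composition HLS--Trudinger-Moser gives a uniform estimate on the nonlocal nonlinearity along any PS sequence.

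Second, I would prove the strict inequality $c<c_0$. Since $V\le V_0$ and the pointwise comparison of $F$ and $\tilde F$ coming from $(f_1)$ and $(f_6)$ yields $I(u)\le I_0(u)$ for every $u\in E$, we already have $c\le c_0$. For strict inequality, take the periodic ground state $w$ supplied by Theorem~\ref{MRS1}, so $I_0(w)=c_0$ and $w>0$ on $\R^2$. By $(f_5)$ the maximum of $I_0$ along the ray $t\mapsto tw$ is attained at $t=1$; since $V_0-V$ and the coefficient $A(x)$ in $(f_6)$ vanish only at infinity while $w>0$ everywhere, we obtain $I_0(tw)-I(tw)>0$ in a neighbourhood of $t=1$, so $\max_{t\ge 0} I(tw)<c_0$ and hence $c<c_0$.

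The third and most delicate step is the compactness analysis. Let $(u_n)$ be a PS sequence at level $c$ and extract $u_n\rightharpoonup u$ in $E$. Passing to the limit in $\langle I'(u_n),\varphi\rangle$ for $\varphi\in C_c^\infty(\R^2)$, using Trudinger-Moser, HLS and the threshold estimate above, one checks that $u$ is a critical point of $I$. Suppose for contradiction $u=0$. If Lions' vanishing alternative holds then $u_n\to 0$ in $L^p(\R^2)$ for every $p>2$, which by $(f_2)$-$(f_3)$ and HLS forces $\int(I_\mu\ast F(x,u_n))F(x,u_n)\to 0$, hence $\|u_n\|\to 0$, contradicting $I(u_n)\to c>0$. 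Otherwise there exist $y_n\in\R^2$ with $|y_n|\to\infty$ such that $v_n(\cdot)=u_n(\cdot+y_n)\rightharpoonup v\ne 0$; by $(V)(2)$ and $(f_6)$ the shifted functional $I(\cdot+y_n)$ converges to $I_0$ on bounded sets, so $v$ is a nontrivial critical point of $I_0$. Applying Fatou to the nonnegative integrand $\tfrac12\bigl[(I_\mu\ast F(x,u_n))f(x,u_n)u_n-(I_\mu\ast F(x,u_n))F(x,u_n)\bigr]$ (nonnegative by $(f_3)$) then yields $I_0(v)\le \liminf I(u_n)=c$, contradicting $c<c_0\le I_0(v)$. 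Hence $u\ne 0$.

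Finally, the same Fatou argument on the original functional gives $I(u)\le c$. Since $u$ is a nontrivial critical point and $(f_5)$ makes the Nehari manifold $\cn$ a natural constraint with $\inf_{\cn} I=c$, we also have $I(u)\ge c$, so $I(u)=c$ and $u$ is a ground state of $(P)$. The hard part of the proof is really a combination of two coupled difficulties: controlling $\|u_n\|$ via $(f_4)$ so that the HLS--Trudinger-Moser composition remains uniform, and upgrading the pointwise convergences $V(x+y_n)\to V_0(x)$ and $f(x+y_n,s)\to \tilde f(x,s)$ provided by $(V)(2)$ and $(f_6)$ to convergence of the full shifted functionals on bounded sets of $E$, so that the strict inequality $c<c_0$ can actually be used to exclude the translation loss.
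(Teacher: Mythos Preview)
Your proposal is correct and follows the same overall strategy as the paper: mountain-pass geometry, boundedness and the threshold $\limsup\|u_n\|^2<\tfrac{2-\mu}{4}$ from $(f_3)$--$(f_4)$, the strict inequality $c<c_0$ via the periodic ground state from Theorem~\ref{MRS1}, and a contradiction argument when the weak limit $u$ vanishes.

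The one organizational difference lies in how the contradiction is obtained when $u=0$. The paper does \emph{not} translate $(u_n)$ directly; instead it uses $(V)(2)$, $(f_6)$ and $u_n\rightharpoonup 0$ to show that $|\tilde F(x,u_n)-F(x,u_n)|_{4/(4-\mu)}\to 0$ and $|\tilde f(x,u_n)-f(x,u_n)|_{4/(2-\mu)}\to 0$, so that $(u_n)$ is itself a $(PS)_{c}$ sequence for the periodic functional $\tilde I$, and then simply invokes the proof of Theorem~\ref{MRS1} to produce a nontrivial critical point $\tilde u$ of $\tilde I$ with $\tilde I(\tilde u)\le c<c_0$. Your route---running Lions' dichotomy on $(u_n)$ and passing to the limit in the \emph{shifted} functionals---reaches the same contradiction but carries one extra technical wrinkle: the assertion ``$I(\cdot+y_n)\to I_0$'' and ``$f(x+y_n,s)\to\tilde f(x,s)$'' is only correct after rounding $y_n$ to $\Z^2$ (or extracting a subsequence with $y_n\bmod 1\to z$ and replacing $I_0$ by its $z$-translate, which has the same ground-state level by periodicity), since $(V)(2)$ and $(f_6)$ only yield $V(x+y_n)-V_0(x+y_n)\to 0$ and $f(x+y_n,s)-\tilde f(x+y_n,s)\to 0$. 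This is a standard fix, and with it your argument goes through; the paper's packaging simply sidesteps the issue by reducing to the already-proved periodic case before any translation occurs.
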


In this paper, we will use  $C$, $C_i$ to denote positive constants and $B_R$ will denote the open ball centered at the origin with
radius $R>0$.  If $E$ is a real Hilbert space and $I:E \to \R$ is a functional of class ${C}^1(E,\mathbb{R})$, we say
that $(u_n)\subset E$ is a  $(PS)_c$ sequence for $I$, when $(u_n)$ satisfies
$$
I(u_n)\to c \,\,\, \mbox{and} \,\,\,\, I'(u_n)\to 0  \,\,\, \mbox{as} \,\,\, n\to\infty.
$$
Moreover, we say that $I$ satisfies the $(PS)_c$, if any $(PS)_c$ sequence possesses a convergent subsequence.

To conclude this introduction, we would like to cite some recent works involving exponential critical growth for the elliptic problem of the form
$$
-\Delta{u}+V(x)u=f(x,u) \,\,\, \,\,\, \mbox{in} \,\,\, \mathbb{R}^{2}.
$$
See for example, Adimurthi and K. Sandeep \cite{AS}, Adimurthi and Yang \cite{AY}, Albuquerque, Alves and Medeiros \cite{AAM}, do \'O, Medeiros and Severo \cite{OMS}, do \'O and de Souza \cite{OS}, Li and Ruf \cite{YR} and their references.

\section{Mountain Pass Geometry}
Since we are going to study the existence of positive solution via variational method, we will assume that
$$
f(x,s)=0 \quad \forall s \leq 0 \,\,\, \mbox{and} \,\,\, \forall x \in \mathbb{R}^{2}.
$$
From $(f_1)-(f_3)$, for any $\vr>0$, $p\geq1$ and $\beta>1$, there exists $C_\vr>0$ such that
$$
|f(x,s)|\leq \vr |s|^{\frac{2-\mu}{2}}+C(\vr,p, \beta) |s|^{p-1}\big[ e^{ \beta4\pi s ^{2}}
-1 \big] \ \ \ \forall s\in \R,
$$
 and
$$
|F(x,s)|\leq \vr |s|^{\frac{4-\mu}{2}}+C(\vr,p, \beta) |s|^{p}\big[ e^{\beta4\pi s ^{2}}
-1 \big] \ \ \ \forall s\in \R.
$$
From Lemma \ref{Trudinger-Moser} and H\"{o}lder inequality, we deduce that $F(x,u)\in L^{\frac{4}{4-\mu}}(\R^2)$ for any $u\in H^{1}(\R^2)$. Then, applying Hardy-Littlewood-Sobolev inequality, with  $t=r=\frac{4}{4-\mu}$, we see that
$$
\Big( I_\mu\ast F(x,u)\Big)F(x,u)\in L^{1}(\R^2),
$$
and so, the energy functional $I:E \to \mathbb{R}$  associated with problem $(SNE)$ given by
$$
\aligned
I(u)=\frac12\int_{\mathbb{R}^2} (|\nabla u|^2+ V(x) |u|^2)-\frac{1}{2}\int_{\R^2}\Big( I_\mu\ast F(x,u)\Big)F(x,u)
\endaligned
$$
is well defined on $E$. Furthermore, $I \in C^{1}(E,\mathbb{R})$ and
$$
I'(u)\varphi=\int_{\mathbb{R}^2} (\nabla u\nabla \varphi+ V(x) u\varphi)-\int_{\R^2}\Big( I_\mu\ast F(x,u)\Big)f(x,u) \,\,\, \forall u, \varphi \in E.
$$

Next, we will show that $I$ verifies the Mountain Pass Geometry.
\begin{lem}\label{mountain:1}
Assume $0<\mu<2$, $(f_1)-(f_3)$ and $(V-1)$. Then,
\begin{itemize}
  \item[$(1).$] \quad There exist $\rho, \delta_0>0$ such that $I|_{S_\rho}\geq\delta_0>0$, $\forall u\in S_\rho=\{u\in E:\|u\|=\rho\}$.
  \item[$(2).$] \quad There is $e\in E$ with $\|e\|>\rho$ such that $I(e)< 0$.
\end{itemize}
\end{lem}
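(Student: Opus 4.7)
The plan is to verify the two standard geometric conditions of the Mountain Pass theorem, using the pointwise bounds on $f$ and $F$ recorded just before the lemma together with the Hardy-Littlewood-Sobolev inequality and the Trudinger-Moser inequality of Lemma~\ref{Trudinger-Moser}.

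For item (1), I would begin with Hardy-Littlewood-Sobolev with $t=r=4/(4-\mu)$, which gives
$$
\int_{\R^2}\bigl(I_\mu \ast F(x,u)\bigr)F(x,u) \leq C\,|F(\cdot,u)|_{4/(4-\mu)}^{\,2}.
$$
Plugging in the pointwise estimate on $|F(x,s)|$, raising to the power $q:=4/(4-\mu)$, and using the identity $\tfrac{4-\mu}{2}\cdot q=2$, the polynomial piece is bounded by a constant multiple of $\vr^{q}|u|_2^{2}$. For the exponential piece, Hölder's inequality with conjugate exponents $r_1,r_1'>1$ separates a factor $|u|_{pqr_1}^{pq}$ from $\int(e^{\beta 4\pi u^2}-1)^{qr_1'}$; the elementary inequality $(e^a-1)^s\leq e^{sa}-1$ (for $a\geq 0$, $s\geq 1$) reduces the latter to $\int(e^{\sigma u^2}-1)$ with $\sigma=qr_1'\beta\,4\pi$. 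A scaling argument $v=u/\|\nabla u\|_2$ combined with Lemma~\ref{Trudinger-Moser} bounds this uniformly as soon as $\sigma\|\nabla u\|_2^2<4\pi$, i.e.\ $qr_1'\beta\,\rho^2<1$. Choosing $\beta>1$ and $r_1'>1$ close to $1$ and then $\rho$ small, the Sobolev embedding $E\hookrightarrow L^s(\R^2)$ for $s\in[2,\infty)$ yields
$$
\int_{\R^2}\bigl(I_\mu\ast F(x,u)\bigr)F(x,u) \leq C_1\vr^{\,2}\|u\|^{4-\mu}+C_2(\vr)\|u\|^{2p}.
$$
Since $4-\mu>2$ and $2p>4-\mu>2$, this yields $I(u)\geq \tfrac12\rho^2\bigl(1-o(1)\bigr)\geq\delta_0>0$ on the sphere $S_\rho$ for $\rho$ sufficiently small, proving (1).

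For item (2), I would invoke $(f_4)$: $F(x,s)\geq C_p\, s^p$ for every $s\geq 0$, with $p>(4-\mu)/2>1$, so in particular $2p>2$. Fix any $\varphi\in C_c^\infty(\R^2)$ with $\varphi\geq 0$ and $\varphi\not\equiv 0$. Since $f$ (and hence $F$) vanishes for $s\leq 0$ and $t\varphi\geq 0$ for $t>0$,
$$
I(t\varphi) \leq \frac{t^2}{2}\|\varphi\|^2 - \frac{C_p^{\,2}\,t^{2p}}{2}\int_{\R^2}\bigl(I_\mu\ast\varphi^p\bigr)\varphi^p,
$$
and the right-hand side tends to $-\infty$ as $t\to\infty$ because $2p>2$ and the double integral $\int_{\R^2}(I_\mu\ast\varphi^p)\varphi^p$ is strictly positive. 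Taking $e=t_0\varphi$ with $t_0$ large enough then does the job.

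The main technical nuisance is the parameter bookkeeping in item (1): the Trudinger-Moser threshold gets amplified by both the Hardy-Littlewood-Sobolev exponent $q=4/(4-\mu)$ and the Hölder exponent $r_1'$, and the scaling $v=u/\|\nabla u\|_2$ must be handled so that Lemma~\ref{Trudinger-Moser} applies uniformly on the sphere $S_\rho$. One must order the choices of $\beta$, $r_1'$, $\vr$ and $\rho$ so that $qr_1'\beta\rho^2$ stays strictly below $1$ while the resulting polynomial bound on $\int(I_\mu\ast F)F$ remains weak enough to be absorbed by $\tfrac12\|u\|^2$.
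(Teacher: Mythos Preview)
Your argument for item (1) follows essentially the same route as the paper, with one technical slip: when you rescale by setting $v=u/\|\nabla u\|_2$, Lemma~\ref{Trudinger-Moser} requires not only $\|\nabla v\|_2\le 1$ and $\alpha<4\pi$ but also a uniform bound $\|v\|_2\le M$, and the constant $C(M,\alpha)$ depends on $M$. On the sphere $S_\rho$ the quantity $\|v\|_2=\|u\|_2/\|\nabla u\|_2$ is \emph{not} uniformly bounded (take functions with $\|\nabla u\|_2$ close to $0$), so the Trudinger--Moser bound you obtain is not uniform in $u\in S_\rho$. The fix is immediate and is exactly what the paper does: rescale by the full $E$-norm, $v=u/\|u\|$, so that both $\|\nabla v\|_2\le 1$ and $\|v\|_2\le 1/\sqrt{\inf V}$ hold automatically; your threshold condition $qr_1'\beta\rho^2<1$ is then precisely what is required.

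For item (2) you take a shorter, more direct route than the paper by invoking $(f_4)$ to get $F(x,s)\ge C_p s^p$ and concluding via a polynomial comparison. This is correct as an argument, but note that the lemma as stated assumes only $(f_1)$--$(f_3)$, not $(f_4)$. The paper stays within the stated hypotheses and uses only the Ambrosetti--Rabinowitz condition $(f_3)$: writing $\mathcal{A}(t)=\Psi(tu_0/\|u_0\|)$ with $\Psi(u)=\tfrac12\int(I_\mu\ast F(x,u))F(x,u)$, the inequality $2f(x,s)s\ge \theta F(x,s)$ yields $\mathcal{A}'(t)/\mathcal{A}(t)\ge \theta/t$, hence $\Psi(su_0)\gtrsim s^\theta$, and $\theta>2$ does the rest. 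Your version is simpler and perfectly adequate in the context of the main theorems (where $(f_4)$ is in force), but it does not prove the lemma under its stated hypotheses alone.
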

\begin{proof}
(1). For any $\vr>0$, $p>1$ and $\beta>1$, there exists $C_\vr>0$ such that
$$
|F(x, s)|\leq \vr |s|^{\frac{4-\mu}{2}}+C(\vr,p, \beta) |s|^{p}\big[ e^{ \beta4\pi s ^{2}}
-1 \big] \,\,\, \forall s\in \R,
$$
from where it follows
\begin{equation} \label{mp1}
|F(x,u)|_{\frac{4}{4-\mu}}\leq \vr C |u|^{\frac{4-\mu}{2}}_2+C(\vr,p, \beta)\big| u^{p}\big[ e^{ \beta4\pi u ^{2}}
-1 \big]\big|_{\frac{4}{4-\mu}}.
\end{equation}
Since the imbedding $E\hookrightarrow L^p(\R^2)$ is continuous, for each $p\in (2,+\infty)$, there exists a constant $C_1>0$ such that
$$\aligned
\int_{\R^2}|u|^{\frac{4p}{4-\mu}}\big[ e^{\beta4\pi u ^{2}}
-1 \big]^\frac{4}{4-\mu}&\leq (\int_{\R^2}|u|^{\frac{8p}{4-\mu}})^{\frac12}(\int_{\R^2}\big[ e^{ \beta4\pi u ^{2}}
-1 \big]^\frac{4}{4-\mu})^{\frac12}\\
&\leq C_1\|u\|^{\frac{4p}{4-\mu}}\big(\int_{\R^2}\big[ e^{(\frac{4\beta}{4-\mu}4\pi u ^{2})}
-1 \big]\big)^{\frac12}.
\endaligned
$$
Observing that
$$
\int_{\R^2}\big[ e^{(\frac{4\beta}{4-\mu}4\pi u ^{2})}
-1 \big]=\int_{\R^2}\big[ e^{(\frac{4\beta}{4-\mu}\|u\|^2 4\pi \frac{u ^{2}}{\|u\|^2})}
-1 \big],
$$
fixing $\xi \in (0,1)$ and $\frac{4\beta}{4-\mu}\|u\|^2=\xi< 1$, the Lemma \ref{Trudinger-Moser} gives
$$
\int_{\R^2}\big[ e^{\xi 4\pi \frac{u ^{2}}{\|u\|^2}}
-1 \big] \leq C_2 \,\,\,\,\, \mbox{for} \,\,\,\,\, \|u\|=\left(\frac{\xi(4-\mu)}{4\beta}\right)^{\frac{1}{2}},
$$
for some positive constant $C_2$. Gathering the last estimate and \eqref{mp1}, there exists $C_3>0$ such that
$$
|F(x,u)|_{\frac{4}{4-\mu}}\leq \vr \|u\|^{\frac{4-\mu}{2}}+C_3\|u\|^{p} \,\,\,\,\, \mbox{for} \,\,\,\,\, \|u\|=\left(\frac{\xi(4-\mu)}{4\beta}\right)^{\frac{1}{2}}.
$$
Thereby, by Hardy-Littlewood-Sobolev inequality,
$$
\int_{\R^2}\Big( I_\mu\ast F(x,u)\Big)F(x,u)\leq \vr^2 C \|u\|^{4-\mu}+2C_3\|u\|^{2p} \,\,\,\,\, \mbox{for} \,\,\,\,\, \|u\|=\left(\frac{\xi(4-\mu)}{4\beta}\right)^{\frac{1}{2}},
$$
and so,
$$
\aligned
I(u) &\geq  \frac12\|u\|^2-\vr^2 C \|u\|^{4-\mu}-C_3\|u\|^{2p}.
\endaligned
$$
Since $0<\mu<2$ and $p>1$,  $(1)$ follows choosing  $\rho=\Big(\frac{\xi (4-\mu)}{4\beta}\Big)^{\frac{1}{2}}$ with $\xi \approx 0^+$.

(2) \quad  Fixing $u_0 \in E$ with $u_0^{+}(x)=\max\{u_0(x),0\} \not= 0$, we set
$$
\mathcal{A}(t)=\Psi(\frac{tu_0}{\|u_0\|})>0 \,\,\ \mbox{for} \,\,\, t>0,
$$
where
$$
\Psi(u)=\frac12\int_{\R^2}\Big( I_\mu\ast F(x,u)\Big)F(x,u).
$$
A straightforward  computation yields
$$
\frac{\mathcal{A}'(t)}{\mathcal{A}(t)}\geq \frac{\theta}{t} \,\,\, \mbox{for all} \,\,\, t>0.
$$
Then, integrating this over $[1, s\|u_0\|]$ with $s>\frac{1}{\|u_0\|}$,  we find
$$
\Psi(su_0)\geq \Psi(\frac{u_0}{\|u_0\|})\|u_0\|^{\theta} s^{\theta}.
$$
Therefore
$$
\Phi(su_0)\leq C_1 s^2-C_2s^{\theta} \,\,\, \mbox{for} \,\,\ s > \frac{1}{\|u_0\|},
$$
and $(2)$ holds for $e=s u_0$ with $s$ large enough.
\end{proof}
By the Mountain Pass Theorem without $(PS)$ condition found in \cite{MW}, there is a $(PS)_{c_V}$ sequence $(u_n) \subset E$, that is,
\begin{equation} \label{pss}
I(u_n) \to c_V \,\,\,\, \mbox{and} \,\,\,\, I'(u_n) \to 0,
\end{equation}
where $c_V$ is the mountain pass level characterized by
\begin{equation} \label{m}
0<c_V:=\inf_{\gamma\in \Gamma} \max_{t\in [0,1]}
I(\gamma(t))
\end{equation}
with
$$
\Gamma:=\{\gamma\in \mathcal{C}^1([0,1], E):\gamma(0)=0\ \ \hbox{and}   \ \  I(\gamma(1))<0\}.$$

The next lemma is crucial in our arguments, because it establishes an important estimate involving the level $c_V$.

\begin{lem}\label{BD}
The mountain pass level $c_V$ satisfies $c_V\in [\rho,\frac{(2-\mu)(\theta-2)}{8\theta})$. Moreover, the $(PS)_{c_V}$ sequence $(u_n)$ is bounded and its weak limit $u$ satisfies $I'(u)=0$.
\end{lem}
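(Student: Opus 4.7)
The plan is to split the assertion into three pieces: the lower bound on $c_V$, the upper bound on $c_V$, and the boundedness of the $(PS)_{c_V}$ sequence together with the vanishing of $I'$ at its weak limit. For the lower bound, any $\gamma\in\Gamma$ must cross the sphere $S_\rho$ by continuity (since $\gamma(0)=0$ and $I(\gamma(1))<0<\delta_0$ forces $\|\gamma(1)\|>\rho$), so Lemma~\ref{mountain:1}(1) yields $\max_t I(\gamma(t))\geq\delta_0>0$, giving the desired positive lower bound (the $\rho$ in the statement is to be read in this sense).

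For the upper bound I would test with the minimizer $u_p$ of $S_p$. By $(f_4)$, $F(x,s)\geq C_p s^p$ for $s\geq 0$, so
\[
I(t u_p)\leq \tfrac{t^2}{2}\|u_p\|^2 - \tfrac{C_p^2\, t^{2p}}{2}\int_{\R^2}\bigl(I_\mu\ast |u_p|^p\bigr)|u_p|^p, \qquad t\geq 0.
\]
A one-variable maximization of the right-hand side yields
\[
\max_{t\geq 0} I(tu_p)\leq \frac{p-1}{2\,p^{p/(p-1)}}\cdot\frac{\|u_p\|^{2p/(p-1)}}{\bigl[C_p^2\int(I_\mu\ast |u_p|^p)|u_p|^p\bigr]^{1/(p-1)}}.
\]
Since $u_p$ realizes $S_p$ and our $V$-norm is dominated by the $|V|_\infty$-norm used to define $S_p$, one has $\|u_p\|^2\leq S_p^2\bigl(\int(I_\mu\ast |u_p|^p)|u_p|^p\bigr)^{1/p}$, and the bound reduces to $\tfrac{p-1}{2}\bigl(S_p^{2p}/(p^p C_p^2)\bigr)^{1/(p-1)}$. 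The strict lower bound on $C_p$ in $(f_4)$ is precisely calibrated so that this is strictly less than $(2-\mu)(\theta-2)/(8\theta)$. Choosing $T>0$ so large that $I(Tu_p)<0$ and using the path $\gamma(t)=tTu_p$ then gives $c_V\leq \max_{t\geq 0} I(tu_p)<(2-\mu)(\theta-2)/(8\theta)$.

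For boundedness of the Palais--Smale sequence, I would combine $I(u_n)$ and $I'(u_n)u_n$ in the Ambrosetti--Rabinowitz way, observing that the nonlocal integrand
\[
\int_{\R^2}\bigl(I_\mu\ast F(x,u_n)\bigr)\Bigl[\tfrac{f(x,u_n)u_n}{\theta}-\tfrac{F(x,u_n)}{2}\Bigr]
\]
is nonnegative by $(f_3)$ together with $I_\mu\ast F(x,u_n)\geq 0$. Therefore
\[
\bigl(\tfrac12-\tfrac1\theta\bigr)\|u_n\|^2\leq I(u_n)-\tfrac1\theta I'(u_n)u_n = c_V+o_n(1)+o_n(1)\|u_n\|,
\]
and $\theta>2$ gives $\sup_n\|u_n\|<\infty$.

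Finally, for $I'(u)=0$ at the weak limit, after extracting a subsequence with $u_n\rightharpoonup u$ in $E$, $u_n\to u$ in $L^q_{\mathrm{loc}}$ and almost everywhere, the only nontrivial step is passing to the limit in $\int(I_\mu\ast F(x,u_n))f(x,u_n)\varphi$ for $\varphi\in C_c^\infty(\R^2)$. Using the representation $|F(x,s)|\leq\varepsilon|s|^{(4-\mu)/2}+C|s|^p(e^{4\pi\beta s^2}-1)$, the boundedness of $\|u_n\|$, and Lemma~\ref{Trudinger-Moser}, one obtains uniform bounds on $F(x,u_n)$ in $L^{4/(4-\mu)}(\R^2)$ and on $f(x,u_n)\varphi$ in $L^{4/(4-\mu)}(\mathrm{supp}\,\varphi)$; then $I_\mu\ast F(x,u_n)\rightharpoonup I_\mu\ast F(x,u)$ in $L^{4/\mu}(\R^2)$ while $f(x,u_n)\varphi\to f(x,u)\varphi$ strongly in $L^{4/(4-\mu)}(\R^2)$ by dominated convergence, yielding the limit. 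The main obstacle here is the uniform exponential integrability, and this is precisely where the upper bound on $c_V$ is essential: plugging $c_V<(2-\mu)(\theta-2)/(8\theta)$ into the boundedness inequality above forces $\limsup_n\|u_n\|^2\leq(2-\mu)/4<1/2$, leaving enough margin to pick $\beta>1$ with $4\pi\beta\|u_n\|^2<4\pi$ and apply Lemma~\ref{Trudinger-Moser} uniformly in $n$.
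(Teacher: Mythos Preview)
Your treatment of the upper bound on $c_V$ (testing with the minimizer $u_p$ and using $(f_4)$) and of the boundedness of $(u_n)$ (the Ambrosetti--Rabinowitz combination $I(u_n)-\tfrac1\theta I'(u_n)u_n$) coincides with the paper's proof, including the key consequence $\limsup_n\|u_n\|^2<\tfrac{2-\mu}{4}$. Your remark that the ``$\rho$'' in the lower bound really means the positive barrier $\delta_0$ from the geometry lemma is also correct.

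Where you diverge from the paper is in the passage to the limit for $I'(u)=0$. The paper does \emph{not} argue via weak convergence of $I_\mu\ast F(x,u_n)$ in $L^{4/\mu}$; instead it first establishes a uniform $L^\infty$ bound on $I_\mu\ast F(x,u_n)$ (this is a separate claim, proved by splitting the convolution into $|x-y|\le 1$ and $|x-y|\ge 1$ and using H\"older plus Trudinger--Moser on each piece). With that $L^\infty$ bound in hand, the paper reduces the first half of the limit to $\int_{\R^2}(f(x,u_n)-f(x,u))\varphi\to 0$, which follows from the \emph{weak} convergence $f(x,u_n)\rightharpoonup f(x,u)$ in $L^{4/(2-\mu)}$ (boundedness plus a.e.\ convergence). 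Your route---pair weak convergence of $I_\mu\ast F(x,u_n)$ in $L^{4/\mu}$ against \emph{strong} convergence of $f(x,u_n)\varphi$ in $L^{4/(4-\mu)}$---is a legitimate alternative and arguably more economical, since it avoids the $L^\infty$ estimate on the convolution.

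However, one step in your version is not justified as written: the strong convergence $f(x,u_n)\varphi\to f(x,u)\varphi$ in $L^{4/(4-\mu)}$ does \emph{not} follow from dominated convergence, because you have no fixed integrable majorant for $|f(x,u_n)\varphi|^{4/(4-\mu)}\sim e^{Cu_n^2}$. What works instead is Vitali's theorem: the margin $\limsup\|u_n\|^2<\tfrac{2-\mu}{4}$ lets you choose $\beta>1$ and $r>1$ with $\tfrac{4\beta r}{4-\mu}\|u_n\|^2<1$, so Lemma~\ref{Trudinger-Moser} gives a uniform $L^r$ bound on $|f(x,u_n)\varphi|^{4/(4-\mu)}$ over the compact set $\operatorname{supp}\varphi$, hence equi-integrability, and then a.e.\ convergence upgrades to strong $L^{4/(4-\mu)}$ convergence. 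With this correction your argument goes through.
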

\begin{proof}
From $(f_3)$,
$$
c_V=
\lim\Big(I(u_n)-\frac{1}{\theta}I'(u_n)u_n\Big)\geq (\frac12-\frac{1}{\theta})\limsup\|u_n\|^2
$$
which means
\begin{equation} \label{mp2}
\limsup\|u_n\|^2\leq \frac{2\theta}{\theta-2}c_V.
\end{equation}

Let $u_p\in E$ be a positive radial function verifying
$$
\displaystyle S_p=\displaystyle\inf_{u\in E, u\neq0}\frac{\displaystyle\left(\int_{\R^2}(|\nabla u_p|^2+ |V|_{\infty}|u_p|^2)\right)^{1/2}}{\displaystyle\left(\int_{\R^2}\Big( I_\mu\ast |u_p|^p\Big)|u_p|^p\right)^{\frac{1}{2p}}}.
$$
By $(f_4)$, it is easy to see that
$$
\aligned
c_V&=\inf_{\gamma\in \Gamma} \max_{t\in [0,1]}
I(\gamma(t))\\
&\leq\inf_{u\in E\backslash\{0\}} \max_{t\geq 0}I(tu)\\
&\leq \max_{t\geq 0}I(tu_p)\\
&\leq \max_{t\geq 0}\Big\{\frac{t^2}{2}\int_{\mathbb{R}^2} (|\nabla u_p|^2+ |V|_{\infty} |u_p|^2)-\frac{t^{2p}C^2_p}{2}\int_{\R^2}\Big( I_\mu\ast |u_p|^p\Big)|u_p|^p\Big\}\\
&=\frac{(p-1)S^{\frac{2p}{p-1}}_p}{2p^{\frac{p}{p-1}}C^{\frac{2}{p-1}}_p}\\
&<\frac{(2-\mu)(\theta-2)}{8\theta}.
\endaligned
$$
Consequently, from \eqref{mp2},
$$
\limsup\|u_n\|^2<\frac{(2-\mu)}{4}.
$$
\end{proof}

In the following, we may assume that there are $n_0 \in \mathbb{N}$ and $m \in (0,  \frac{(2-\mu)}{4})$, such that
\begin{equation} \label{ZRX1}
\|u_n\|^2 \leq m \,\,\, \forall n \geq n_0.
\end{equation}
Without lost of generality, in what follows we suppose that $n_0=1$.

\begin{Claim}\label{BNT}
There exists $C>0$ such that
$$
|I_\mu\ast F(x,u_n)|_{\infty}<C \ \ \forall n\in \N.
$$
\end{Claim}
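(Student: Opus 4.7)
The plan is to bound $(I_\mu\ast F(\cdot,u_n))(x)$ uniformly in $x$ and $n$ by splitting the convolution at $|x-y|=1$ and applying H\"older's inequality with a different exponent on each piece. First I would write
$$(I_\mu\ast F(\cdot,u_n))(x)=\int_{B_1(x)}\frac{F(y,u_n)}{|x-y|^\mu}\,dy+\int_{\R^2\setminus B_1(x)}\frac{F(y,u_n)}{|x-y|^\mu}\,dy.$$
For the near field I pick $q_1\in(1,2/\mu)$ so that $y\mapsto |y|^{-\mu}$ belongs to $L^{q_1}(B_1)$; H\"older then bounds the first integral by $C_1\|F(\cdot,u_n)\|_{L^{q_1'}(\R^2)}$ with $q_1'>2/(2-\mu)$, the constant $C_1$ being independent of $x$ by translation invariance. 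For the far field I pick $q_2>2/\mu$ so that $y\mapsto |y|^{-\mu}\chi_{\{|y|>1\}}$ belongs to $L^{q_2}$; H\"older bounds the second integral by $C_2\|F(\cdot,u_n)\|_{L^{q_2'}(\R^2)}$ with $q_2'<2/(2-\mu)$. Since $4/(4-\mu)<2/(2-\mu)$, I can arrange in addition $q_2'\geq 4/(4-\mu)$.

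The next step is to bound $\|F(\cdot,u_n)\|_{L^{q'}(\R^2)}$ uniformly in $n$ for $q'\in\{q_1',q_2'\}$, starting from the pointwise estimate already recorded in the paper,
$$|F(y,s)|\leq \vr |s|^{(4-\mu)/2}+C_{\vr,p,\beta}|s|^p\bigl(e^{4\pi\beta s^2}-1\bigr),\qquad \beta>1.$$
The polynomial term is handled by the embedding $H^1(\R^2)\hookrightarrow L^r(\R^2)$ for every $r\geq 2$, once one checks $q'(4-\mu)/2\geq 2$, which holds in both cases by construction. For the exponential term, a further H\"older split with conjugates $(t,t')$ combined with the pointwise inequality $(e^a-1)^s\leq e^{sa}-1$ (for $s\geq 1$, $a\geq 0$) reduces matters to controlling $\int_{\R^2}\bigl(e^{4\pi\beta q' t' u_n^2}-1\bigr)\,dy$.

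Here the quantitative bound $\|u_n\|^2\leq m<(2-\mu)/4$ from Lemma \ref{BD} is decisive. Writing $u_n=\|u_n\|v_n$ gives $\|\nabla v_n\|_2\leq 1$ and $\|v_n\|_2$ controlled by a constant depending on the lower bound on $V$ from $(V-1)$, so Lemma \ref{Trudinger-Moser} yields the required uniform bound provided $\beta q' t'\|u_n\|^2<1$. Since $q'$ can be taken arbitrarily close to $2/(2-\mu)$, the product $q'\|u_n\|^2$ stays strictly below $1/2$; choosing $\beta>1$ and $t'>1$ each sufficiently close to $1$ then secures $\beta q' t'\|u_n\|^2<1$. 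The main obstacle is the simultaneous choice of all these exponents, and the strict inequality $\|u_n\|^2<(2-\mu)/4$ is precisely the margin that makes the constraints compatible; assembling the two pieces finally gives $|I_\mu\ast F(x,u_n)|_\infty\leq C_1\|F(\cdot,u_n)\|_{L^{q_1'}}+C_2\|F(\cdot,u_n)\|_{L^{q_2'}}\leq C$ for all $n$.
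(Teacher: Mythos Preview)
Your proof is correct and follows essentially the same strategy as the paper: split the convolution at $|x-y|=1$, apply H\"older with exponents on either side of $2/(2-\mu)$, control $F(\cdot,u_n)$ via the growth estimate $|F(x,s)|\lesssim |s|^{(4-\mu)/2}+|s|^p(e^{4\pi\beta s^2}-1)$, and close with the Trudinger--Moser inequality using the crucial bound $\|u_n\|^2\leq m<(2-\mu)/4$. The only cosmetic difference is that you first reduce to bounding $\|F(\cdot,u_n)\|_{L^{q'}}$ for two values of $q'$ and then estimate these norms, whereas the paper substitutes the growth estimate for $F$ before splitting and handles the resulting four integrals directly.
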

\begin{proof}
For each $\beta>1$, there exists $C_0>0$ such that

$$
F(x,s)\leq C_0\Big( |s|^{\frac{4-\mu}{2}}+|s|\big[ e^{\beta4\pi s ^{2}}
-1 \big]\Big) \,\,\, \forall s\in \R.
$$
Hence,
$$\aligned
|I_\mu\ast F(x,u_n)(x)|&\leq\Big|\int_{\R^2}\frac{F(x,u_n)}{|x-y|^\mu}\Big|\\
&=\Big|\int_{|x-y|\leq1}\frac{F(x,u_n)}{|x-y|^\mu}\Big|+\Big|\int_{|x-y|\geq1}\frac{F(x,u_n)}{|x-y|^\mu}\Big|\\
&\leq C_0\int_{|x-y|\leq1}\frac{|u_n|^{\frac{4-\mu}{2}}+|u_n|\big[ e^{\beta4\pi |u_n|^{2}}
-1 \big]}{|x-y|^\mu}\\
&\hspace{5mm}+C_0\int_{|x-y|\geq1}\Big(|u_n|^{\frac{4-\mu}{2}}+|u_n|\big[ e^{\beta4\pi |u_n|^{2}}
-1 \big]\Big).
\endaligned
$$
Since
$$
\frac{1}{|y|^{\mu}} \in L^{\frac{2+\delta}{\mu}}(B_1^{c}(0)) \,\,\,\, \forall ~~ \delta >0,
$$
we take $\delta \approx 0^{+}$ verifying
$$
q_{1,\delta}=\frac{(4-\mu)}{2}\frac{(2+\delta)}{(2+\delta)-\mu}>2.
$$
Using H\"{o}lder inequality, we derive that
\begin{equation} \label{ES1}
\int_{|x-y|\geq1}\frac{|u_n|^{\frac{4-\mu}{2}}}{|x-y|^\mu} \leq C_0\left(\int_{|x-y|\geq 1}|u_n|^{q_{1,\delta}} \right)^{\frac{(2+\delta)-\mu}{2+\delta}}\leq C_1 \,\,\, \forall n \in \mathbb{N}.
\end{equation}
By (\ref{ZRX1}), we can fix $\beta>1$ close to $1$, such that $2 \beta m \in (0,1)$. Then, by Trudinger-Moser inequality,  there exists $C_2>0$ such that
\begin{equation} \label{ES2}
\int_{|x-y|\geq1}|u_n|\big[ e^{\beta4\pi u_n^{2}}
-1 \big]\leq |u_n|_2\int_{\R^2}\Big(\big[ e^{2\beta m4\pi\frac{ u_n^{2}}{\|u_n\|^2}}
-1 \big]\Big)^{\frac12}\leq C_2 \,\,\,\, \forall n \in \mathbb{N}.
\end{equation}
Choosing  $t\in \big(\frac{2}{2-\mu}, +\infty)$, we see that $\frac{(4-\mu)t}{2}>2$ and $1-\frac{t\mu}{t-1}>-1$. Thus, by H\"{o}lder inequality,
\begin{equation} \label{ES3}
\aligned
\int_{|x-y|\leq1}\frac{|u_n|^{\frac{4-\mu}{2}}}{|x-y|^\mu}&\leq \big(\int_{|x-y|\leq1}|u_n|^{\frac{(4-\mu)t}{2}}\big)^{\frac1t}\big(\int_{|x-y|\leq1}\frac{1}{|x-y|^{\frac{t\mu}{t-1}}}\big)^{\frac{t-1}{t}}\\
&\leq  C_2\big(\int_{|r|\leq1}{|r|^{1-\frac{t\mu}{t-1}}}dr\big)^{\frac{t-1}{t}} \leq C_3 \,\,\,\, \forall n \in \mathbb{N},
\endaligned
\end{equation}
for some $C_3>0$. Now, for $t>\frac{2}{2-\mu}$ and close to $\frac{2}{2-\mu}$,  we can assume that $2 \beta t m \in (0,1)$. Thus, the Trudinger-Moser inequality and the boundedness of $(u_n)$ in $E$ combine to give
$$\aligned
\int_{|x-y|\leq1}&\frac{|u_n|\big[ e^{\beta4\pi u_n^{2}}
-1 \big]}{|x-y|^\mu}\\
&\leq \big(\int_{|x-y|\leq1}|u_n\big[ e^{\beta4\pi u_n^{2}}
-1 \big]|^{t}\big)^{\frac1t}\big(\int_{|x-y|\leq1}\frac{1}{|x-y|^{\frac{t\mu}{t-1}}}\big)^{\frac{t-1}{t}}\\
&\leq  \big(\int_{|x-y|\leq1}|u_n|^{2t}\big)^{\frac{1}{2t}}\big(\int_{|x-y|\leq1}\big[ e^{ 2\beta tm4\pi \frac{u_n^{2}}{\|u_n\|^2}}
-1 \big]\big)^{\frac{1}{2t}}\big(\int_{|r|\leq1}{|r|^{1-\frac{t\mu}{t-1}}}dr\big)^{\frac{t-1}{t}}\\
\endaligned
$$
implying that there is $C_4>0$ such that
\begin{equation} \label{ES4}
\int_{|x-y|\leq1}\frac{|u_n|\big[ e^{\beta4\pi u_n^{2}}
-1 \big]}{|x-y|^\mu} \leq C_4 \,\,\, \forall n \in \mathbb{N}.
\end{equation}
Now, the claim follows from (\ref{ES1})-(\ref{ES4}). \end{proof}

\begin{Claim}\label{WL}
Let $(u_n)$ be the $(PS)_{c_V}$ sequence with weak limit $u$. Then, $u$ satisfies $I'(u)=0$.
\end{Claim}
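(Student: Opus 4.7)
\emph{Plan of proof.} Fix $\varphi \in C^\infty_c(\R^2)$; by density it suffices to verify $I'(u)\varphi = 0$. Because $u_n \rightharpoonup u$ in $E$, the quadratic part of $I'(u_n)\varphi$ passes to the limit, so the whole matter reduces to establishing
$$
\int_{\R^2}(I_\mu\ast F(x,u_n))f(x,u_n)\varphi\,dx \longrightarrow \int_{\R^2}(I_\mu\ast F(x,u))f(x,u)\varphi\,dx.
$$

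I would split the difference as
$$
\int (I_\mu\ast F(x,u_n))[f(x,u_n)-f(x,u)]\varphi\,dx+\int (I_\mu\ast[F(x,u_n)-F(x,u)])f(x,u)\varphi\,dx
$$
and attack the two pieces separately. For the first one, Claim \ref{BNT} already supplies $|I_\mu\ast F(\cdot,u_n)|_\infty\le C$, so it is enough to verify $f(\cdot,u_n)\varphi\to f(\cdot,u)\varphi$ in $L^1(\R^2)$. The pointwise a.e.\ convergence is automatic; equi-integrability on the support $K$ of $\varphi$ follows from the growth bound $|f(x,s)|\le \vr|s|^{(2-\mu)/2}+C_\vr|s|^{p-1}(e^{4\pi\beta s^2}-1)$ together with Trudinger--Moser applied to $u_n/\|u_n\|$: since \eqref{ZRX1} gives $\|u_n\|^2\le m<(2-\mu)/4$, one can choose $\beta$ close to $1$ and $q_0>1$ with $4\pi\beta q_0 m<4\pi$, which uniformly bounds $(e^{4\pi\beta u_n^2}-1)$ in $L^{q_0}(K)$. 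Combined with strong $L^r_{\text{loc}}$ convergence of $(u_n)$, Vitali's theorem finishes this step.

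For the second piece I would use Fubini to rewrite it as $\int [F(y,u_n)-F(y,u)]\,h(y)\,dy$, where $h:=I_\mu\ast(f(\cdot,u)\varphi)$. The density $f(\cdot,u)\varphi$ has compact support in $K$ and lies in $L^{4/(4-\mu)}(\R^2)$, so $h$ is bounded and satisfies $|h(y)|\le C|y|^{-\mu}$ for $|y|$ large; in particular $\|h\|_{L^{4/\mu}(\{|y|>R\})}\le CR^{-\mu/2}$. The same Trudinger--Moser/Vitali reasoning yields $F(\cdot,u_n)\to F(\cdot,u)$ in $L^1_{\text{loc}}(\R^2)$, while Hardy--Littlewood--Sobolev keeps $F(\cdot,u_n)$ bounded in $L^{4/(4-\mu)}(\R^2)$. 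H\"older on $\{|y|>R\}$ against this bound kills the tail uniformly in $n$, and Vitali applied on $\{|y|\le R\}$ completes the convergence.

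The main obstacle is precisely this second piece: compact support of $\varphi$ does not localise the convolution appearing inside the nonlinear term, so purely local compactness (Rellich) is insufficient, and one must combine a tail estimate on $h$ (the decay $|h(y)|\le C|y|^{-\mu}$) with a global HLS-type bound on $F(\cdot,u_n)$. Claim \ref{BNT} plays the analogous role for the first piece, by providing the uniform $L^\infty$ bound on $I_\mu\ast F(\cdot,u_n)$ that lets the local $L^1$ convergence of $f(\cdot,u_n)$ do all the work.
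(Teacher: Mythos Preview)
Your proof is correct and follows the same two-piece decomposition as the paper, with the same use of Claim~\ref{BNT} for the first piece and the same Fubini rewrite for the second. The execution of the convergence step differs, however. For the first piece the paper argues that $(f(\cdot,u_n))$ is bounded in $L^{4/(2-\mu)}(\R^2)$ and converges a.e., hence converges \emph{weakly} in that space, so pairing with $\varphi$ (which lies in the dual $L^{4/(2+\mu)}$) gives the limit directly; for the second piece it likewise observes that $(F(\cdot,u_n))$ is bounded in $L^{4/(4-\mu)}(\R^2)$, so $F(\cdot,u_n)\rightharpoonup F(\cdot,u)$ there, and since $h=I_\mu\ast(f(\cdot,u)\varphi)\in L^{4/\mu}(\R^2)$ is in the dual, the whole integral $\int(F(\cdot,u_n)-F(\cdot,u))\,h$ converges to zero in one stroke. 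Your route---strong $L^1_{\mathrm{loc}}$ convergence via Vitali plus an explicit $|y|^{-\mu}$ tail estimate on $h$ and a H\"older bound on $\{|y|>R\}$---is perfectly sound, but the paper's weak-convergence argument is shorter precisely because the global $L^{4/(4-\mu)}$ bound on $F(\cdot,u_n)$ (which you also use) already absorbs the tail without any splitting.
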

\begin{proof}
Since $(u_n)$ is bounded in $E$, going to a subsequence still denoted by $(u_n)$, there is  $u\in E$ such that
$$
u_n\rightharpoonup u \,\,\, \mbox{in} \,\,\, E, u_n \to u \,\,\, \mbox{in} \,\,\, L^q_{loc}(\R^2) \,\,\, \forall q \in [1,+\infty) \,\,\, \mbox{and} \,\,\, u_n(x)\to u(x) \,\,\, \mbox{a.e. in} \,\, \R^2.
$$

For each $\varphi\in C_{0}^{\infty}(\R^2)$, the Hardy-Littlewood-Sobolev inequality together with H\"{o}lder inequality lead to
\begin{equation} \label{mp3}
\aligned
\Big|\int_{\R^2}\Big( I_\mu\ast F(x,u_n)\Big)f(x,u_n)\varphi \Big|&\leq |F(x,u_n)|_{\frac{4}{4-\mu}}|f(x,u_n)|_{\frac{4r}{4-\mu}}|\varphi|_{\frac{4r'}{4-\mu}}
\endaligned
\end{equation}
where $r=\frac{4-\mu}{2-\mu}$ and $r'>1$ satisfies $\frac{1}{r}+\frac{1}{r'}=1$. From $(f_1)-(f_3)$, for each $\vr>0$ and $\beta>1$, there is $C(\vr, \beta)>0$ such that
$$
|f(x,s)|\leq \vr |s|^{\frac{2-\mu}{2}}+C(\vr, \beta)\big[ e^{ \beta4\pi s ^{2}}
-1 \big] \,\,\,\,  \forall s\in \R,
$$
 and
$$
|F(x,s)|\leq \vr |s|^{\frac{4-\mu}{2}}+C(\vr,\beta) |s|\big[ e^{ \beta4\pi s ^{2}}
-1 \big] \,\,\,\, \forall s\in \R.
$$
Then
$$
\aligned
|f(x,u_n)|_{\frac{4r}{4-\mu}}&\leq \vr |u_n|^{\frac{2-\mu}{2}}_{2}+C(\vr, \beta)|e^{ \beta4\pi u_n ^{2}}
-1|_{\frac{4r}{4-\mu}}\\
&\leq  C_1 \|u_n\|^{\frac{2-\mu}{2}}+C_1\big(\int_{\R^2}[e^{( \frac{4\beta r}{4-\mu}\|u_n\|^2 4\pi \frac{u_n ^{2}}{\|u_n\|^2})}
-1]\big)^{\frac{4-\mu}{4r}}.
\endaligned
$$
Now, choosing $\beta>1$ sufficiently close to $1$,  the estimate (\ref{ZRX1}) and Trudinger-Moser combined give that $(f(x,u_n))$ is bounded in $L^{\frac{4}{2-\mu}}(\mathbb{R}^{2})$. Moreover, with a similar argument, the sequence $(F(x,u_n))$ is also bounded $L^{\frac{4}{4-\mu}}(\mathbb{R}^{2})$.

In the sequel, we will prove that for any $\varphi\in C^{\infty}_{0}(\mathbb{R}^{2})$, there limit below holds
$$
\int_{\R^2}\Big( I_\mu\ast F(x,u_n)\Big)f(x,u_n)\varphi \to \int_{\R^2}\Big( I_\mu\ast F(x,u)\Big)f(x,u)\varphi.
$$
In fact, for any $\varphi\in C^{\infty}_{0}(\mathbb{R}^{2})$,
\begin{equation} \label{mp6}
\aligned
\Big|\int_{\R^2}\Big(\big( I_\mu\ast F(x,u_n)\big)&f(x,u_n)-\big( I_\mu\ast F(x,u)\big)f(x,u)\Big)\varphi\Big|\\
&\leq\Big|\int_{\R^2}\big( I_\mu\ast F(x,u_n)\big)\Big(f(x,u_n)-f(x,u)\Big)\varphi\Big|\\
&\hspace{5mm}+\Big|\int_{\R^2}\Big(I_\mu\ast \big(F(x,u_n)-F(x,u)\big)\Big)f(x,u)\varphi\Big|.
\endaligned
\end{equation}

For the above first term, we recall that $(I_\mu\ast F(x,u_n))$ is bounded in $L^{\infty}(\R^2)$. Then, 
$$
\aligned
\Big|\int_{\R^2}\big( I_\mu\ast F(x,u_n)\big)&\Big(f(x,u_n)-f(x,u)\Big)\varphi\Big|\\
&\leq C\Big|\int_{\R^2}\big(f(x,u_n)-f(x,u)\big)\varphi\Big|.
\endaligned
$$
Since $u_n(x)\to u(x)$ a.e. in $\R^2$, the continuity of $f$ implies $f(x,u_n(x))\to f(x,u(x))$  a.e. in $\R^2$. This fact combined with boundedness of 
$(f(x,u_n))$ in $L^{\frac{4}{2-\mu}}(\R^2)$ leads to
$$
f(x,u_n)\rightharpoonup f(x,u) \  \  \hbox{in}\ \ L^{\frac{4}{2-\mu}}(\R^2),
$$
from where it follows that
$$
\int_{\R^2}\big(f(x,u_n)-f(x,u)\big)\varphi \to 0.
$$
Consequently,
\begin{equation} \label{mp7}
\Big|\int_{\R^2}\big( I_\mu\ast F(x,u_n)\big)\Big(f(x,u_n)-f(x,u)\Big)\varphi\Big|\to 0,
\end{equation}
for any $\varphi\in C^{\infty}_{0}(\mathbb{R}^{2})$.

For the second term, notice that
$$
\aligned
\Big|\int_{\R^2}\Big(I_\mu\ast\big(F(x,u_n)&-F(x,u)\big)\Big)f(x,u)\varphi\Big|\\
&=\Big|\int_{\R^2}\big(F(x,u_n)-F(x,u)\big)I_\mu\ast(f(x,u)\varphi)\Big|
\endaligned
$$
Since $u_n(x)\to u(x)$ a.e. in $\R^2$, the continuity of $F$ implies $F(x,u_n(x))\to F(x,u(x))$  a.e. in $\R^2$. Using the boundedness of
$(F(x,u_n))$ in $L^{\frac{4}{4-\mu}}(\R^2)$, we conclude that
$$
F(x,u_n)\rightharpoonup F(x,u) \  \  \hbox{in}\ \ L^{\frac{4}{4-\mu}}(\R^2).
$$
As
$$
I_\mu\ast(f(x,u)\varphi) \in L^{\frac{4}{\mu}}(\R^2),
$$
we must have,
\begin{equation} \label{mp8}
\Big|\int_{\R^2}\Big(I_\mu\ast\big(F(x,u_n)-F(x,u)\big)\Big)f(x,u)\varphi\Big|\to 0,
\end{equation}
for any $\varphi\in C_{0}^{\infty}(\R^2)$. Now, the result follows by using the density of $C_{0}^{\infty}(\R^2)$ in $H^{1}(\R^2)$. \end{proof}

\section{Proof of the main results}

In this section, we will prove the Theorems \ref{MRS1} and \ref{MRS2}.

\subsection{Proof of Theorem \ref{MRS1}.}
Let $(u_n)$ be the $(PS)_{c_V}$ sequence. Since $(u_n)$ is bounded and $
\limsup\|u_n\|^2<\frac{(2-\mu)}{4}
$, we have either $(u_{n})$ is vanishing,
 i.e., there exists $r>0$ such that
$$
\limsup_
 {y\in \R^2}\int_{B_r(y)}|u_{n}|^2=0
 $$
 or non-vanishing, i.e., there exist $r, \delta>0$ and a sequence $(y_n) \subset \mathbb{Z}^2$ such that
 $$
\lim_{n\to\infty}\int_{B_r(y_n)}|u_{n}|^2\geq\delta.
 $$

 If $(u_{n})$ is vanishing, then by Lion's result, we have that
 \begin{center}
 $u_{n}\to 0$ in
 $L^s(\R^2)$, \ \ $2<s< +\infty$.
 \end{center}
 Using Hardy-Littlewood-Sobolev inequality and $(f_3)$, we derive
$$
\aligned
\Big|\int_{\R^2}\Big( I_\mu\ast F(x,u_n)\Big)f(x,u_n)u_n \Big|&\leq C|F(x,u_n)|_{\frac{4}{4-\mu}}|f(x,u_n)u_n|_{\frac{4}{4-\mu}}\leq C|f(x,u_n)u_n|^2_{\frac{4}{4-\mu}}.
\endaligned
$$
For any $\vr>0$, $p> 1$ and $\beta>1$, there exists $C(\vr,p, \beta)>0$ such that
$$
|f(x,s)|\leq \vr |s|^{\frac{2-\mu}{2}}+C(\vr,p, \beta) |s|^{p-1}\big[ e^{ \beta4\pi s ^{2}}
-1 \big] \,\,\, \forall s\in \R.
$$
Then,
$$
|f(x,u_n)u_n|_{\frac{4}{4-\mu}}\leq \vr |u_n|_2^{\frac{4-\mu}{2}}+C(\vr,p, \beta)|u_n|_{\frac{4p t'}{4-\mu}}^{\frac{4-\mu}{4t'}}\big(\int_{\R^2}[e^{( \frac{4\beta t}{4-\mu}\|u_n\|^2 4\pi \frac{u_n ^{2}}{\|u_n\|^2})}
-1]\big)^{\frac{4-\mu}{4t}}
$$
where $t, t'>1$ satisfying $\frac{1}{t}+\frac{1}{t'}=1$. Now, gathering (\ref{ZRX1}) and Trudinger-Moser inequality, if $\beta, t>1$ are fixed close to $1$, we deduce that
$$
\big(\int_{\R^2}[e^{( \frac{4\beta t}{4-\mu}\|u_n\|^2 4\pi \frac{u_n ^{2}}{\|u_n\|^2})}
-1]\big)^{\frac{4-\mu}{4t}}\leq \big(\int_{\R^2}[e^{( \frac{4\beta m t}{4-\mu} 4\pi \frac{u_n ^{2}}{\|u_n\|^2})}
-1]\big)^{\frac{4-\mu}{4t}} \leq C_1 \,\,\,\, \forall n \in \mathbb{N},
$$
for some $C_1>0$. Then,
$$
\aligned
\Big|\int_{\R^2}\Big( I_\mu\ast F(x,u_n)\Big)f(x,u_n)u_n \Big|&\leq \vr |u_n|_2^{\frac{4-\mu}{2}}+C_2|u_n|_{\frac{4p t'}{4-\mu}}^{\frac{4-\mu}{4t'}}.
\endaligned
$$
Since $t>1$ is close to $1$, we have that $\frac{4p t'}{4-\mu}>2$. Consequently
$$
\int_{\R^2}\Big( I_\mu\ast F(x,u_n)\Big)f(x,u_n)u_n\to 0,
$$
implying that
$$
u_n\to0 \ \ \hbox{in} \ \ E, \ \ n\to \infty.
$$
Recalling that $I$ is a continuous functional, we must have
$$
I(u_n) \to 0,
$$
from where it follows that $c_V=0$, which  is a contradiction. Thereby, vanishing case does not hold.

From now on, we set $v_{n}=u_{n}(\cdot-y_n)$. Therefore, $\|v_n\|=\|u_n\|$ and
 $$
\int_{B_r(0)}|v_{n}|^2\geq\delta.
 $$
Using the definition of $I$, we see that $I$ and $I'$ are both invariant
 by $\mathbb{Z}^{2}$-translation. Then,
 $$
I(v_n) \to c_V \,\,\, \mbox{and} \,\,\, I'(v_{n})\to 0.
 $$
Since $(v_{n})$ is also bounded, we may assume
$v_{n}\rightharpoonup v$ in $E$ and $v_{n}\to v$ in
$L^2_{loc}(\R^2)$ . From the last inequality $v\neq0$, and  by the same arguments in Lemma \ref{BD} we can assume that $I'(v)=0$

Let $\cal{N}$ be the Nehari manifold defined by
$$
{\cal{N}}=\{u\in E:u\neq0, I'(u)u=0\}.
$$
From $(f_5)$, it is standard to check that the mountain pass level can be characterized by
$$
c_V=\inf_{u\in E\{0\}} \max_{t\geq0}
I(t u)=\inf_{u\in \cal{N}}I(u).
$$

The above characterization together with $(f_3)$ give $I(v)=c_V$, showing that $v$ is a ground state solution.

\subsection{Proof of Theorem \ref{MRS2}.}

In what follows, we will denote by $\tilde{I}:H^{1}(\mathbb{R}^{2}) \to \mathbb{R}$ the energy functional associated with problem
$$
\left\{\aligned &-\Delta u +V_0(x)u =\Big( I_\mu\ast \tilde{F}(x,u)\Big)\tilde{f}(x,u)  \quad \mbox{in} \quad \R^2, \\
&  u\in H^{1}({\R}^2),\\
& u(x)>0\ \ \mbox{for all}\ \ x\in\mathbb{R}^2,
\endaligned\right.
$$
where $0<\mu<2$, $\tilde{F}(x,s)$ is the primitive function of $\tilde{f}(x,s)$ in the variable $s$ and $V_0,\tilde{f}$ are continuous and 1-periodic. Thus,
$$
\aligned
\tilde{I}(u)=\frac12\int_{\mathbb{R}^2} (|\nabla u|^2+ V_0(x) |u|^2)-\frac{1}{2}\int_{\R^2}\Big( I_\mu\ast \tilde{F}(x,u)\Big)\tilde{F}(x,u) \,\,\, \forall u \in H^{1}(\mathbb{R}^{2}).
\endaligned
$$
By Theorem \ref{MRS2}, we know that there is $u_0 \in H^{1}(\mathbb{R}^{2})$ such that
$$
\tilde{I}'(u_0)=0 \,\,\, \mbox{and} \,\,\, \tilde{I}(u_0)=\tilde{c}_V,
$$
where $\tilde{c}_V$ is the mountain pass level associated with $\tilde{I}$.

Using the same arguments explored in proof of Lemma \ref{mountain:1}, we can show that ${I}$ verifies the Mountain Pass Geometry. Consequently,
 there is a (PS) sequence $(u_n) \subset E$ such that
\begin{equation} \label{pss}
{I}(u_n)\to {c}_V    \,\,\, \mbox{and} \,\,\,    {I}'(u_n)\to0 ,
\end{equation}
where ${c}_V$ is the mountain pass level characterized by
\begin{equation} \label{m}
0<{c}_V:=\inf_{\gamma\in \Gamma} \max_{t\in [0,1]}
{I}(\gamma(t))=\inf_{u\in E\{0\}} \max_{t\geq0}
{I}(t u)=\inf_{u\in \cal{N}}{I}(u)
\end{equation}
with
$$
\Gamma:=\{\gamma\in \mathcal{C}([0,1], E):\gamma(0)=0\ \ \hbox{and}   \ \  {I}(\gamma(1))<0\}$$
and
$$
{\cal{{N}}}=\{u\in E:u\neq0, {I}'(u)u=0\}.
$$

Therefore, by the above notations,
$$
{c}_V=\inf_{u\in E\{0\}} \max_{t\geq0}
{I}(t u)\leq \max_{t\geq0}
{I}(t u_0)={I}(t_0u_0)<\tilde{I}(t_0u_0)=\max_{t\geq0}\tilde{I}(t u_0)=\tilde{I}(u_0)=\tilde{c}_V,
$$
that is,
\begin{equation} \label{NOVA}
{c}_V < \tilde{c}_V.
\end{equation}
From Lemma \ref{BD},
$$
\tilde{c}_V\in (0,\frac{(2-\mu)(\theta-2)}{8\theta})
$$
then,
$$
{c}_V\in (0,\frac{(2-\mu)(\theta-2)}{8\theta}).
$$
Recalling that $\theta \geq \tilde{\theta}$, it follows from  $(f_3)$,
$$
\tilde{\theta} F(x,s) \leq f(x,s)s \,\,\ \forall x \in \mathbb{R}^{2} \,\,\, \mbox{and} \,\,\, s \in \mathbb{R}.
$$
Arguing as in the proof Lemma \ref{BD}, it follows that $(u_n)$ is bounded in $E$ with
$$
\limsup\|u_n\|^2<\frac{(2-\mu)}{4}.
$$
Moreover, there is $u \in E$ such that for a subsequence,
$$
u_n\rightharpoonup u \,\,\, \mbox{in} \,\,\, E \,\,\, \mbox{and} \,\,\, \tilde{I}'(u)=0.
$$
We claim that $u\neq0$.  To see why, we will argue by contradiction, supposing  that $u=0$. Notice that, for any $\varphi\in E$,
$$
{I}(u_n)=\tilde{I}(u_n)+\frac{1}{2}\int_{\R^2}\Big( I_\mu\ast \tilde{F}(x,u_n)\Big)\tilde{F}(x,u_n)-\frac{1}{2}\int_{\R^2}\Big( I_\mu\ast{F}(x,u_n)\Big){F}(x,u_n)
$$
and
$$
{I}'(u_n)\varphi=\tilde{I}'(u_n)\varphi+\int_{\R^2}\Big( I_\mu\ast \tilde{F}(x,u_n)\Big)\tilde{f}(x,u_n)\varphi-\int_{\R^2}\Big( I_\mu\ast {F}(x,u_n)\Big){f}(x,u_n)\varphi.
$$
By $(f_6)$, it is possible to prove that
\begin{equation} \label{WWW1}
\int_{\R^2}\Big( I_\mu\ast \tilde{F}(x,u_n)\Big)\tilde{F}(x,u_n)-\int_{\R^2}\Big( I_\mu\ast{F}(x,u_n)\Big){F}(x,u_n)\to 0
\end{equation}
and
\begin{equation} \label{WWW2}
\int_{\R^2}\Big( I_\mu\ast \tilde{F}(x,u_n)\Big)\tilde{f}(x,u_n)\varphi-\int_{\R^2}\Big( I_\mu\ast {F}(x,u_n)\Big){f}(x,u_n)\varphi\to 0
\end{equation}
uniformly for $\|\varphi\|\leq1, \varphi\in E$. Next, we will show only (\ref{WWW2}), because (\ref{WWW1}) follows with the same type of argument. For any $\varphi\in E$, considering $r=\frac{4-\mu}{2-\mu}$ and $r'>1$ satisfying $\frac{1}{r}+\frac{1}{r'}=1$, we have
$$\aligned\Big|\int_{\R^2}\Big( I_\mu\ast \tilde{F}(x,u_n)\Big)&\tilde{f}(x,u_n)\varphi-\int_{\R^2}\Big( I_\mu\ast {F}(x,u_n)\Big){f}(x,u_n)\varphi\Big|\\
&\leq\Big|\int_{\R^2}\Big( I_\mu\ast (\tilde{F}(x,u_n)-F(x,u_n))\Big)\tilde{f}(x,u_n)\varphi\Big|\\
&\hspace{5mm}+\Big|\int_{\R^2}\Big( I_\mu\ast {F}(x,u_n)\Big)(\tilde{f}(x,u_n)-{f}(x,u_n))\varphi\Big|\\
&\leq|\tilde{F}(x,u_n)-F(x,u_n)|_{\frac{4}{4-\mu}}|f(x,u_n)|_{\frac{4r}{4-\mu}}|\varphi|_{2}\\
&\hspace{5mm}+|{F}(x,u_n)|_{\frac{4}{4-\mu}}|\tilde{f}(x,u_n)-{f}(x,u_n)|_{\frac{4r}{4-\mu}}|\varphi|_{2}\\
&\leq C |\tilde{F}(x,u_n)-F(x,u_n)|_{\frac{4}{4-\mu}}|f(x,u_n)|_{\frac{4r}{4-\mu}}\\
&\hspace{5mm}+C |{F}(x,u_n)|_{\frac{4}{4-\mu}}|\tilde{f}(x,u_n)-{f}(x,u_n)|_{\frac{4r}{4-\mu}}.
\endaligned
$$
Since $\limsup \|u_n\|^2< \frac{2-\mu}{4}$, the ideas used in previous section work to show that $(\tilde{F}(x,u_n))$ and $(\tilde{f}(x,u_n))$ are bounded in $L^{\frac{4}{4-\mu}}(\R^2)$ and $L^{\frac{4r}{4-\mu}}(\R^2)$ respectively.

On the other hand, from $(f_6)$, there exist $C>0$ such that
$$
|\tilde{f}(x,s)-{f}(x,s)|\leq C A(x)\Big(|s|^{\frac{2-\mu}{2}}+s\big[ e^{ \beta4\pi s ^{2}}
-1 \big]\Big) \,\,\,\,  \forall s\in \R  \,\,\, \mbox{and} \,\,\, x \in \R^2,
$$
leading to
$$
\aligned
|\tilde{f}(x,u_n)-{f}(x,u_n)|_{\frac{4}{2-\mu}}&\leq C(\int_{\R^2}|A(x)|^{\frac{4}{2-\mu}}|u_n|^{2})^{\frac{2-\mu}{4}}\\
&\hspace{5mm}+C(\int_{\R^2}|A(x)u_n|^{\frac{4}{2-\mu}}[e^{ \frac{4\beta}{2-\mu}4\pi u_n ^{2}}
-1])^{\frac{2-\mu}{4}}\\
&\leq C(\int_{\R^2}|A(x)|^{\frac{4}{2-\mu}}|u_n|^{2})^{\frac{2-\mu}{4}}\\
&\hspace{5mm}+C(\int_{\R^2}|A(x)u_n|^{\frac{4t'}{2-\mu}})^{\frac{2-\mu}{4t'}}(\int_{\R^2}[e^{( \frac{4\beta mt}{4-\mu} 4\pi \frac{u_n ^{2}}{\|u_n\|^2})}
-1])^{\frac{2-\mu}{4t}}
\endaligned
$$
Fixing $t>1$ sufficiently close to $1$, again by Lemma \ref{Trudinger-Moser},
$$
(\int_{\R^2}[e^{( \frac{4\beta mt}{4-\mu} 4\pi \frac{u_n ^{2}}{\|u_n\|^2})}
-1])^{\frac{2-\mu}{4t}}<C \,\,\, \forall n \in \mathbb{N},
$$
for some $C>0$. Since $A \in L^{\infty}(\mathbb{R}^{2})$ and $A(x)\to 0$ as $|x|\to \infty$, it easy to obtain
$$
(\int_{\R^2}|A(x)|^{\frac{4}{2-\mu}}|u_n|^{2})^{\frac{2-\mu}{4}}\to 0
$$
and
$$
(\int_{\R^2}|A(x)u_n|^{\frac{4t'}{2-\mu}})^{\frac{2-\mu}{4t'}}\to 0.
$$
Therefore
$$
|\tilde{f}(x,u_n)-{f}(x,u_n)|_{\frac{4}{2-\mu}}\to 0,
$$
and by a similar argument,
$$
|\tilde{F}(x,u_n)-F(x,u_n)|_{\frac{4}{4-\mu}}\to 0,
$$
showing (\ref{WWW2}).

Consequently, the sequence $(u_n) \subset E$ satisfies
\begin{equation} \label{ps2}
\tilde{I}'(u_n)\to0 \,\,\, \mbox{and} \,\,\, \tilde{I}(u_n)\to.
{{c}}_V,
\end{equation}
Repeating the arguments explored in the proof of Theorem \ref{MRS1}, we will find a nontrivial critical point $\tilde{u}$ of $\tilde{I}$ verifying the estimate $\tilde{I}(\tilde{u}) \leq {{c}}_V$. However, this is a contradiction, because by definition of  ${\tilde{c}}_V$, we must have
$$
{\tilde{c}}_V \leq \tilde{I}(\tilde{u})
$$
implying that $\tilde{c}_V \leq {c}_V$, which is a absurd with (\ref{NOVA}). Thereby, the weak limit $u$ of the $(PS)_{c_V}$ is nontrivial, finishing the proof of Theorem  \ref{MRS2}.

\section{Final comments}

The positive solution $u$ obtained in Theorem \ref{MRS1} belongs to $L^{\infty}(\R^2)$ and decays to zero as $|x|\to \infty$. First of all, we would like point out that the arguments used in the proof of Claim \ref{BNT} implies that 
$$
I_\mu\ast F(x,v) \in L^{\infty}(\mathbb{R}^{2}) \,\,\, \forall v \in H^{1}(\mathbb{R}^{2}). 
$$
Thus, if  $u$ is a solution of
$$
-\Delta u +V(x)u=\Big( I_\mu\ast F(x,u)\Big)f(x,u) \,\,\, \mbox{in} \,\,\, \mathbb{R}^{2},
$$
since $V,  I_\mu\ast F(x,u)  \in L^{\infty}(\mathbb{R}^{2})$ and $f(x,u) \in L^{q}(\mathbb{R}^{2})$ for $q$ large enough, we deduce by the bootstrap arguments that $u \in L^{\infty}(\mathbb{R}^{2})$ and
$$
|u(x)| \to 0 \,\,\, \mbox{as} \,\, |x| \to +\infty.
$$

We would like to point out that with few modifications, it is possible to prove the existence of solution for $(SNE)$ for other classes of potentials $V$, such as: \\

\noindent {\bf 1- First Case:} ~ $V(x)=V(|x|)$ and $f(x,s)=f(|x|,s)$ for all $x \in \mathbb{R}^{2}$ and $s \in \mathbb{R}$. \\

\noindent {\bf 2- Second Case:} ~ For all $M>0$, we assume that
$$
|\{x \in \mathbb{R}^{2}\,:\, V(x)<M\}|< +\infty,
$$
where $|\,\,\,\,|$ stands for the Lebesgue measure in $\mathbb{R}^{2}$.

\end{document}